\definecolor{Red}{rgb}{1,0,0}
\definecolor{Blue}{rgb}{0,0,1}
\definecolor{Olive}{rgb}{0.41,0.55,0.13}
\definecolor{Green}{rgb}{0,1,0}
\definecolor{MGreen}{rgb}{0,0.8,0}
\definecolor{DGreen}{rgb}{0,0.55,0}
\definecolor{Yellow}{rgb}{1,1,0}
\definecolor{Cyan}{rgb}{0,1,1}
\definecolor{Magenta}{rgb}{1,0,1}
\definecolor{Orange}{rgb}{1,.5,0}
\definecolor{Violet}{rgb}{.5,0,.5}
\definecolor{Purple}{rgb}{.75,0,.25}
\definecolor{Brown}{rgb}{.75,.5,.25}
\definecolor{Grey}{rgb}{.5,.5,.5}
\newtheorem*{rep@theorem}{\rep@title}
\newcommand{\newreptheorem}[2]{%
\newenvironment{rep#1}[1]{%
\def\rep@title{#2 \ref{##1}}%
\begin{rep@theorem}}%
{\end{rep@theorem}}}
\theoremstyle{plain}
\newtheorem{theorem}{Theorem}
\newtheorem{theorem*}{Theorem}   
\newtheorem{lemma*}{Lemma} 
\newtheorem{corollary*}{Corollary} 
\newtheorem*{remark*}{Remark}
\newtheorem{example}{Example}
\newlength{\widebarargwidth}
\newlength{\widebarargheight}
\newlength{\widebarargdepth}
\theoremstyle{definition}
\newtheorem{definition}{Definition}
\def\cC{{\cal C}}
\def\cE{{\cal E}}
\newcommand{\defn}{\ensuremath{:  =}}
\newcommand{\E}{\ensuremath{\mathbb{E}}}
\newcommand{\mprob}{\ensuremath{\mathbb{P}}}
\begin{document}

\begin{center}

{\bf{\Large{Analysis of centrality in sublinear preferential attachment trees via the Crump-Mode-Jagers branching process}}}

\vspace*{.25in}

\begin{tabular}{ccc}
{\large{Varun Jog}} & \hspace*{.5in} & {\large{Po-Ling Loh}} \\
{\large{\texttt{vjog@ece.wisc.edu}}} & & {\large{\texttt{loh@ece.wisc.edu}}} \vspace{.2in}
\end{tabular}

Department of ECE \\
Grainger Institute for Engineering \\
University of Wisconsin - Madison \\ Madison, WI 53715

\vspace*{.2in}

October 2016

\vspace*{.2in}

\end{center}

\begin{abstract}
We investigate centrality and root-inference properties in a class of growing random graphs known as sublinear preferential attachment trees. We show that a continuous time branching processes called the Crump-Mode-Jagers (CMJ) branching process is well-suited to analyze such random trees, and prove that almost surely, a unique terminal tree centroid emerges, having the property that it becomes more central than any other fixed vertex in the limit of the random growth process. Our result generalizes and extends previous work establishing persistent centrality in uniform and linear preferential attachment trees. We also show that centrality may be utilized to generate a finite-sized $1-\epsilon$ confidence set for the root node, for any $\epsilon> 0$, in a certain subclass of sublinear preferential attachment trees.
\end{abstract}


\section{Introduction}

Recent years have seen an explosion of datasets possessing some form of underlying network structure~\cite{DorMen03, BarEtal08, Jack08, New10}. Various mathematical models have consequently been derived to imitate the behavior of real-world networks; desirable characteristics include degree distributions, connectivity, and clustering, to name a few. One popular probabilistic model is the Barab\'{a}si-Albert model, also known as the (linear) preferential attachment model~\cite{Barabasi99}. Nodes are added to the network one at a time, and each new node connects to a fixed number of existing nodes with probability proportional to the degrees of the nodes. In addition to modeling a ``rich get richer" phenomenon, the Barab\'{a}si-Albert model gives rise to a scale-free graph, in which the degree distribution in the graph decays as an inverse polynomial power of the degree, and the maximum degree scales as the square root of the size of the network. Such a property is readily observed in many network data sets~\cite{AlbBar02}.

However, networks also exist in which the disparity between high- and low-degree nodes is not as severe. In the sublinear preferential attachment model, nodes are added sequentially with probability of attachment proportional to a fractional power of the degree. This leads to a stretched exponential degree distribution and a maximum degree that scales as a power of the logarithm of the number of nodes~\cite{KraEtal00, Bar16}. Networks exhibiting such behavior include certain citation networks, Wikipedia edit networks, rating networks, and the Digg network~\cite{KunEtal13}. The case when the probability of attachment is uniform over existing vertices is known as uniform attachment and is used to model networks in which the preference given to older nodes is attributed only to birth order and not degree.

The iterative nature of the preferential attachment model generates interesting questions concerning phenomena that arise (and potentially vanish) as the network expands. Dereich and M\"{o}rters~\cite{DerMoe09} established the emergence of a persistent hub---a vertex that remains the highest-degree node in the network after a finite amount of time---in a certain preferential attachment model where edges are added independently. Such a result was also shown to hold for the Barab\'{a}si-Albert preferential attachment model in Galashin~\cite{Galashin13}. Motivated by the fact that persistent hubs do not exist in uniform attachment models, however, our previous work~\cite{JogLoh15} studied the problem of persistent centroids and established that the $K$ most central nodes according to a notion of ``balancedness centrality" always persist in preferential and uniform attachment trees.

Another related problem concerns identifying the oldest node(s) in a network. Shah and Zaman~\cite{Shah11} first studied this problem in the context of a random growing tree formed by a diffusion spreading over a regular tree, and showed that the centroid of the diffusion tree agrees with the root node of the diffusion, with strictly positive probability. Bubeck et al.~\cite{Bubeck14} devised confidence set estimators for the first node in preferential and uniform attachment trees, in which the goal is to identify a set of nodes containing the oldest node, with probability at least $1-\epsilon$. They showed that when nodes are selected according to an appropriate measure of ``balancedness centrality," the required size of the confidence set is a function of $\epsilon$ that does not grow with the overall size of the network. These results were later extended to diffusions spreading over regular trees by Khim and Loh~\cite{KhimLoh15}. Graph centrality ideas, in particular balancedness centrality, have also been leveraged in Tan et al.~\cite{TanEtAl16} to identify the most influential vertices in a social network.  Luo et al.~\cite{LuoEtal13} studied the problem of identifying single or multiple sources of rumors in a graph and proposed certain efficiently computable estimators related to the MAP estimator employed in Shah and Zaman~\cite{Shah11}. Recently, rumor identification has also been analyzed in certain probabilistic models, such as repeated observations of rumor spreading in Dong et al.~\cite{Dong13}, and incomplete information about rumor spreading in Karamchandani et al.~\cite{Karam13}. In addition to having obvious practical implications for pinpointing the origin of a network based on observing a large graph, identifying and removing the oldest nodes may have desirable deleterious effects from the point of view of network robustness~\cite{EckMoe14}.

Previous analysis of determining a finite confidence set \cite{Bubeck14, KhimLoh15}, as well as establishing the persistence of a unique tree centroid \cite{JogLoh15}, crucially depended on the following property satisfied by linear preferential attachment, uniform attachment, and diffusions over regular trees: the ``attraction function" relating the degree of a vertex to its probability of connection at each time step is linear. Bubeck et al.~\cite{Bubeck14} posed an open question concerning the existence of finite-sized confidence sets in the case of sublinear or superlinear preferential attachment; we likewise conjectured in previous work that a unique centroid should persist for a more general class of nonlinear attraction functions~\cite{JogLoh15}. However, the techniques in these papers do not extend readily to nonlinear settings. An approach to dealing with more complicated tree models in the context of diffusions was presented in Shah and Zaman~\cite{Shah15}, using a continuous time branching process known as the Bellman-Harris branching process. In this paper, we show that preferential attachment trees with nonlinear attraction functions may also be analyzed via continuous time branching processes. Our results rely on properties of the Crump-Mode-Jagers (CMJ) branching process~\cite{CrumpMode68, CrumpMode69,Jagers75}. Continuous time branching processes were previously leveraged by Bhamidi~\cite{Bhamidi07} and Rudas et al.~\cite{Rudas07} to establish properties regarding the degree distribution, maximum degree, height, and local structure of a large class of preferential attachment trees.

Our main contributions are twofold: First, we establish the property of \emph{terminal centrality} for sublinear preferential attachment trees, thereby addressing our conjecture in \cite{JogLoh15}. We prove the existence of a unique vertex that becomes more central than any other vertex, in the limit of the growth process. In fact, the existence of a persistent centroid implies terminal centrality, but the latter implication might not hold, since persistent centrality requires a tree centroid to emerge and remain the centroid starting from a single finite time point.  Second, we affirmatively answer the open question of Bubeck et al.~\cite{Bubeck14} by devising finite-sized confidence sets for the root node in sublinear preferential attachment trees. Due to the inapplicability of P\'{o}lya urn theory in the present setting, the proof techniques employed in our paper differ significantly from the analysis used in previous work. Furthermore, the literature concerning CMJ branching processes is vast and unconsolidated, and another important technical contribution of our paper is to gather relevant results and show that they may be applied to study sublinear preferential attachment trees.


The remainder of the paper is organized as follows: In Section~\ref{section: CMJ}, we review CMJ branching processes and show how to embed a preferential attachment tree in a CMJ process. We also verify that the CMJ processes corresponding to certain sublinear preferential attachment trees enjoy useful convergence properties. In Section~\ref{section: persistence}, we establish the existence of a unique terminal centroid in sublinear preferential attachment trees. In Section~\ref{section: adam}, we prove that the confidence set construction via the same centrality measure leads to finite-sized confidence sets for the root node. Although we believe sublinear preferential attachment trees should also possess a persistent centroid, some challenges arise in bridging the gap between terminal centrality and persistent centrality. We discuss these challenges and related open problems in Section~\ref{section: discussion}. Additional proof details are contained in the supplementary appendices. \\

\textbf{Notation:} We write $V(T)$ to denote the set of vertices of a tree $T$, and write Max-Deg$(T)$ to denote the maximum degree of the vertices in $T$. For $u \in V(T)$, we write $(T,u)$ to denote the corresponding rooted tree, which is a tree with directed edges emanating from $u$. We write $(T,u)_{v \downarrow}$ to denote the subtree directed away from $u$ and starting from $v$. Finally, we write Out-Deg$(v)$ to denote the number of children of vertex $v$ in the rooted tree.

\section{Preliminaries}\label{section: CMJ}

In this section, we review properties of the CMJ branching process, laying the groundwork for our analysis of sublinear preferential attachment trees. The CMJ branching process is a general age-dependent continuous time branching process model introduced by Crump, Mode, and Jagers~\cite{CrumpMode68, CrumpMode69, Jagers75}. It begins with a single individual, known as the ancestor, at time $t=0$. An individual $x$ may give birth multiple times throughout its lifetime, and the times at which it produces offspring are given by a point process $\xi$ on $\mathbb R_+$. The defining property of branching processes is that individuals behave in an i.i.d.\ manner; i.e., every individual starts its own independent point process of births from the moment it is born until the time it dies. The resulting branching process is said to be driven by $\xi$. Many common branching processes are special cases of a CMJ process with an appropriate point process and lifetime random variable: If individuals have random lifetimes and give birth to a random number of children at the moment of their death, the resulting branching process is called the Bellman-Harris process. If the lifetimes of individuals are also constant (usually taken to be 1), the resulting process is known as the Galton-Watson process \cite{AthNey04, Har12}.



\begin{definition}[Random preferential attachment tree with attraction function $f$]\label{def: PA}
A sequence of random trees $\{T_n\}$ is generated as follows: At time $n=1$, the tree $T_1$ consists of a single vertex $v_1$. At time $n+1$, a new vertex $v_{n+1}$ is added to $T_n$  via a directed edge from a vertex $v_i$ to $v_{n+1}$, where $v_i$ is chosen with probability proportional to $f(\text{Out-Deg}(v_i))$ and $\text{Out-Deg}(v_i)$ is computed with respect to the tree $T_n$.
\end{definition}
Thus, the linear preferential attachment tree corresponds to the attraction function $f(i) = i+1$,\footnote{Note that for all nodes except the root node, $\text{Deg}(v_i) = \text{Out-Deg}(v_i)+1$. Thus, this model differs slightly from the one considered in our previous work~\cite{JogLoh15} and in Bubeck et al.~\cite{Bubeck14}, since the attractiveness of $v_1$ is proportional to $\text{Deg}(v_1)+1$ rather than $\text{Deg}(v_1)$.} and the uniform attachment tree corresponds to the constant function $f \equiv 1$. We now define sublinear preferential attachment trees, which have an attraction function that lies strictly between those of a linear preferential attachment tree and a uniform attachment tree.

\begin{definition}[Sublinear preferential attachment trees]
Sublinear preferential attachment trees are preferential attachment trees with an attraction function $f$ satisfying the following conditions:
\begin{enumerate}
\item
$f$ is a nondecreasing function.
\item
$f(i) \geq 1$ for all $i \geq 0$, and $f$ is not identically equal to 1.
\item
There exists  $0 < \alpha <1$ such that $$f(i) \leq (i+1)^\alpha,$$
for all $i \geq 0$.
\end{enumerate}
\end{definition}
Note that the last condition implies $f(0) = 1$. When $f(i) = (i+)^\alpha$, we denote the corresponding tree to be the $\alpha$-sublinear preferential attachment tree. To define the branching process corresponding to a preferential attachment tree, we define the point process $\xi$ associated with the attraction function $f$:
\begin{definition}[Point process associated to $f$]
Given an attraction function $f$, the associated point process $\xi$ on $\mathbb R_+$ is a pure-birth Markov process with $f$ as its rate function:
$$\mathbb P\left(\xi(t+dt) - \xi(t) = 1 \mid \xi(t)=i\right) = f(i)dt + o(dt),$$
with the initial condition $\xi(0) = 0$.
\end{definition}
Note that we do not need to normalize the rate of this Markov process: Consider a CMJ process driven by the point process $\xi$ as above, in which individuals never die. Suppose that at some time $t_0$, the branching process consists of $n$ individuals $\{v_1, \dots, v_n\}$, where the number of children of node $v_i$ is denoted by $d_i$. In the discrete time tree evolution, the next vertex $v_{n+1}$ attaches to vertex $v_i$ with probability $\frac{f(d_i)}{\sum_{j=1}^n f(d_j)}$. In the continuous time process, the new vertex ``attaches to $v_i$" if and only if node $i$ has a child before any of the other nodes. This child is then $v_{n+1}$. Using properties of the exponential distribution, we may check that this happens with probability $\frac{f(d_i)}{\sum_{j=1}^n f(d_j)}$, which is exactly the same as that in the discrete time tree evolution. Thus, if we look at the CMJ branching process at the stopping times when successive vertices are born, the resulting trees evolve in the same way as in the discrete time model described in Definition~\ref{def: PA}.

\begin{definition}[Malthusian parameter]
For a point process $\xi$ on $\mathbb R_+$, let $\mu(t) = \mathbb E[\xi(0, t]]$ denote the mean intensity measure. The point process $\xi$ is a \emph{Malthusian process} if there exists a parameter $\theta > 0$ such that
\begin{equation*}
\theta \int_{0}^\infty e^{-\theta t}\mu(t) dt = 1.
\end{equation*}
The constant $\theta$ is called the \emph{Malthusian parameter} of the point process $\xi$.
\end{definition}
\begin{example}
\label{ExaPA}
For the linear preferential attachment tree with $f(i) = i+1$, the associated point process $\xi$ is the \emph{standard Yule process}, defined as follows:
\begin{enumerate}
\item[(a)] $\xi(0) = 0$, and
\item[(b)] $\mathbb P\left(\xi(t+dt)-\xi(t) = 1 \mid \xi(t) = i\right) = (i+1)dt + o(dt).$
\end{enumerate}
The mean intensity measure for the Yule process is $\mu(t) = e^t - 1$, and the Malthusian parameter is equal to 2.
\end{example}
\begin{example}
\label{ExaUA}
For the uniform attachment tree with $f \equiv 1$, the associated point process $\xi$ is the \emph{Poisson point process} with rate 1. The mean intensity measure is $\mu(t) = t$, and the Malthusian parameter is equal to 1.
\end{example}

The Malthusian parameter of a point process plays a critical role in the theory of branching processes. It accurately characterizes the growth rate of the population generated by the CMJ branching process driven by the point process, as follows: If the population at time $t$ is given by $Z_t$, the random variable $e^{-\theta t}Z_t$ converges to a nondegenerate random variable $W$. Various assumptions on the point process lead to different types of convergence results, such as convergence in distribution, in probability, almost surely, in $L^1$, or in $L^2$~\cite{CrumpMode68, CrumpMode69, Doney72, Nerman81}. As derived in Lemma~\ref{lemma: malthus} in Appendix~\ref{appendix: cmj}, the Malthusian parameter for a sublinear preferential attachment process always exists and lies between the values corresponding to linear preferential attachment and uniform attachment trees described in Examples~\ref{ExaPA} and~\ref{ExaUA}.

Our results will rely heavily on the following theorem: 

\begin{theorem}\label{thm: ABC}
Let $\xi$ be the point process corresponding to a sublinear attraction function $f$. The CMJ branching process $Z_t$ driven by $\xi$ describing the growing random tree satisfies
$$e^{-\theta t} Z_t \stackrel{L^2, \text{ a.s.}} \longrightarrow W,$$
where $W$ is an absolutely or singular continuous random variable supported on all of $\mathbb R_+$, satisfying $W > 0$, almost surely.
\end{theorem}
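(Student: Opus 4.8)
The plan is to obtain the convergence from the general limit theory for supercritical CMJ processes and then to establish the three qualitative properties of the limit ($W>0$, atomlessness, and full support) separately. By Lemma~\ref{lemma: malthus} the Malthusian parameter $\theta$ exists and lies between the values for uniform and linear attachment, so the normalization $\theta\int_0^\infty e^{-\theta t}\mu(t)\,dt = 1$ is available. Following Nerman~\cite{Nerman81}, I would introduce the \emph{intrinsic martingale} $R_t = \sum_{x \in \mathcal{F}_t} e^{-\theta \sigma_x}$, where $\sigma_x$ is the birth time of individual $x$ and $\mathcal{F}_t$ is the genealogical frontier at time $t$ (individuals born after $t$ whose mother was born at or before $t$). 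The Malthusian identity is exactly what makes $R_t$ a nonnegative martingale in the natural filtration, so $R_t \to W$ almost surely. Nerman's counting theorem then transfers this to the raw population: since no individual ever dies, counting corresponds to the characteristic $\phi \equiv \mathbf{1}[\,\cdot\, \ge 0]$, and one obtains $e^{-\theta t} Z_t \to c\,W$ almost surely for an explicit positive constant $c$, which after absorbing $c$ into $W$ yields the claimed almost sure limit.

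To upgrade to $L^2$ convergence and, simultaneously, to guarantee nondegeneracy, I would verify the second-moment hypotheses of the $L^2$ theory of Doney~\cite{Doney72} (see also~\cite{CrumpMode68, CrumpMode69}). The decisive input is that, because $f(i) \le (i+1)^\alpha \le i+1$ with $\alpha<1$, the pure-birth process $\xi$ is stochastically dominated by the standard Yule process of Example~\ref{ExaPA}; consequently the relevant first and second factorial moments of $\xi$ are dominated by those of the Yule process. In particular the integral $\int_0^\infty e^{-2\theta t}\,\mu(dt)$ is finite because $\theta > 1/2$, which is the key condition ensuring that $R_t$ is bounded in $L^2$ and hence uniformly integrable. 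This gives convergence of $R_t$ in $L^2$ with $\mathbb{E}[W]>0$, and the same domination controls the second moment of $Z_t$ and promotes the convergence of $e^{-\theta t}Z_t$ to hold in $L^2$ as well.

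For strict positivity, the crucial structural observation is that $\xi$ is a pure-birth process: individuals never die and the ancestor almost surely produces infinitely many descendants, so the process survives with probability one. The general theory supplies the Kesten--Stigum-type dichotomy $\{W = 0\} = \{\text{extinction}\}$ up to null sets, which accompanies the $L^1$/$L^2$ convergence established above; since extinction has probability zero here, we conclude $W > 0$ almost surely.

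Finally, the distributional properties of $W$ demand the most care, and I expect this to be the main obstacle. Decomposing the tree at the ancestor's children, born at the points $\tau_1 < \tau_2 < \cdots$ of $\xi$, each child roots an independent copy of the whole process, which yields the smoothing fixed-point equation $W \stackrel{d}{=} \sum_i e^{-\theta \tau_i} W_i$, where the $W_i$ are i.i.d.\ copies of $W$ independent of $\xi$. To show $W$ has no atoms I would study the Laplace transform $\psi(\lambda) = \mathbb{E}[e^{-\lambda W}]$, which inherits a functional equation from this relation; strict positivity rules out an atom at $0$, while an atom of maximal mass at some $a>0$ would have to be reproduced and strictly diluted by the factors $e^{-\theta\tau_i}$, forcing its mass to vanish, so $W$ is atomless and hence either absolutely continuous or singular continuous. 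For full support, positivity makes $0$ a limit point but not an atom of $\supp(W)$, and the fixed-point equation shows $\supp(W)$ is unbounded and closed under the self-similar scaling, from which one concludes $\supp(W) = \mathbb{R}_+$. The delicate points are the rigorous exclusion of atoms and the verification that no gaps appear in the support, both of which rest on a careful analysis of the smoothing transform rather than on any explicit formula, in contrast to the Yule case where $W$ is exactly exponential.
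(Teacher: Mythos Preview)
Your overall route---invoke the general limit theory for supercritical CMJ processes and then pin down the law of $W$---is the same as the paper's, but the paper's execution differs in two places, and one of your steps has a gap.

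For $L^2$ convergence, the condition you single out, $\int_0^\infty e^{-2\theta t}\,\mu(dt) < \infty$, is a first-moment condition on the \emph{mean} intensity and does not by itself bound the variance of the intrinsic martingale; the second moment of $\hat\xi(\theta) = \int_0^\infty e^{-\theta t}\,d\xi(t)$ involves the second-moment measure of the random point process $\xi$, not just $\mu$. The paper instead verifies the Jagers--Nerman condition $\operatorname{Var}(\hat\xi(\theta)) < \infty$ directly (Lemma~\ref{lemma: stars}): writing $\hat\xi(\theta) = \mathbb{E}[\xi(0,X_\theta]\mid\xi]$ for an independent $X_\theta\sim\operatorname{Exp}(\theta)$ and applying Jensen, it suffices to bound $\mathbb{E}[\xi(0,X_\theta]^2]$; passing to the dominating attraction function $f_\alpha(i)=(1+i)^\alpha$, the tail $\mathbb{P}(\xi_\alpha(0,X_\theta] \ge k) = \prod_{i=0}^{k-1}\frac{(1+i)^\alpha}{\theta+(1+i)^\alpha}$ is computed exactly and shown to decay like a stretched exponential, so all moments are finite. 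Your Yule-domination idea can in fact be pushed through---pathwise $\xi \le \xi_{PA}$ gives $\hat\xi(\theta)\le\hat\xi_{PA}(\theta)$, and $\mathbb{E}[\hat\xi_{PA}(\theta)^2]<\infty$ for $\theta>1$---but the integral you wrote is not the quantity that needs to be finite. For almost-sure convergence the paper similarly verifies Nerman's hypothesis $\int_0^\infty e^{-\tilde\theta t}\,d\mu(t)<\infty$ for some $\tilde\theta<\theta$ (again by Yule domination of $\mu$, using $\theta>1$), rather than just asserting that the counting theorem applies.

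For the qualitative properties of $W$---no atoms, absolute or singular continuity, full support on $\mathbb{R}_+$---the paper does not argue from the smoothing fixed-point equation at all; it simply invokes Theorem~1 of Biggins and Grey~\cite{Biggins79}, which establishes exactly these three facts for the limit of any supercritical CMJ process once nondegenerate convergence is in hand. Your sketch via the fixed-point equation is essentially a redevelopment of that paper, and the parts you yourself flag as delicate (ruling out atoms, ruling out gaps in the support) are precisely what Biggins--Grey handle; citing them, as the paper does, avoids the work.
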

The proof of Theorem~\ref{thm: ABC}, which is contained in Appendix \ref{appendix: cmj}, is established by showing that the technical conditions required for certain theorems about CMJ processes~\cite{Nerman81, JagerNerman84, Biggins79} are satisfied by the point process $\xi$.

\section{Terminal centrality}\label{section: persistence}

We now turn to our main result, which establishes the existence of a unique terminal centroid in sublinear preferential attachment trees. We begin by introducing some notation and basic terminology.

Consider the function $\psi_T: V(T) \to \mathbb N$ defined by
\begin{equation*}
\psi_T(u) = \max_{v \in V(T) \setminus \{u\}} |(T, u)_{v\downarrow}|.
\end{equation*}
Recall that $(T, u)_{v \downarrow}$ denotes the subtree of $T$ directed away from $u$, starting at $v$, as depicted in Figure~\ref{FigDownarrow}. Thus, $\psi_T(u)$ is the size of the largest subtree of the rooted tree $(T, u)$, and measures the level of ``balancedness" of the tree with respect to vertex $u$. We make the following definition:
\begin{definition}
\label{DefCentroid}
Given a tree $T$, a vertex $u \in V(T)$ is called a \emph{centroid} if $\psi_T(u) \leq \psi_T(v)$, for all  $v \in V(T)$.
\end{definition}
Note that although we have defined the centroid with respect to the criterion $\psi_T$, numerous equivalent characterizations of tree centroids exist~\cite{Jor1869, Jack08, Zel68, Sla75, Sla81, Mit78}. (The characterization appearing in Definition~\ref{DefCentroid} coincides with the notion of ``rumor center" defined by Shah and Zaman~\cite{Shah15}.) Furthermore, a tree may have more than one centroid (although by Lemma~\ref{lemma: jogloh1} in Appendix~\ref{AppTrees}, a tree may have at most two centroids, which must then be neighbors). For any two nodes $u$ and $v$, if  $\psi_T(u) \leq \psi_T(v)$, we say that $u$ is \emph{at least as central} as $v$. Finally, we define the notion of terminal centrality:
\begin{definition}
A vertex $v^* \in \cup_{n=1}^\infty V(T_n)$ is a \emph{terminal centroid} for the sequence of growing trees $\{T_n\}_{n \ge 1}$ if for every vertex $u \neq v^*$, there exists a time $M$ (possibly dependent on $u$), such that for all times $n \geq M$, we have
$$\psi_{T_n}(v^*) < \psi_{T_n}(u).$$
\end{definition}
Thus, the terminal centroid eventually becomes more central than any other fixed vertex. (Note, however, that terminal centrality does not immediately imply the property of \emph{persistent centrality}; for instance, $v^*$ might be a terminal centroid without ever being the centroid at any finite time.) We have the following theorem:

\begin{figure}
\begin{center}
\includegraphics[scale = 0.5]{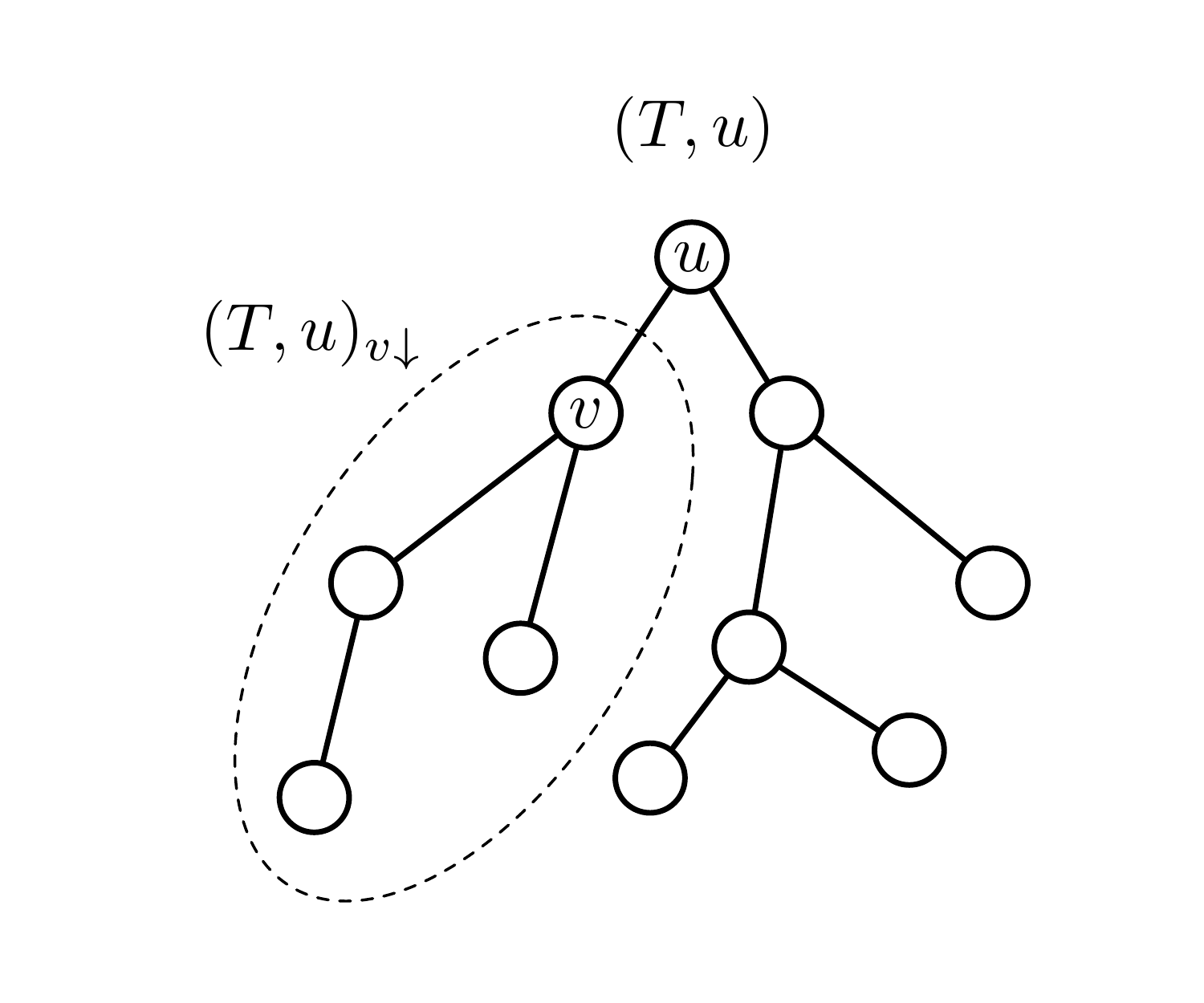}
\end{center}
\caption{A tree $T$ rooted at vertex $u$. The subtree $(T,u)_{v \downarrow}$ is highlighted.}
\label{FigDownarrow}
\end{figure}

\begin{theorem}
\label{ThmPersist}
Sublinear preferential attachment trees have a unique terminal centroid with probability 1.
\end{theorem}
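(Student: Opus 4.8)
The plan is to exploit the CMJ embedding so that every branch of the growing tree becomes an independent copy of the process governed by Theorem~\ref{thm: ABC}, and then to run a ``center of mass'' argument on the limiting branch weights. First I would record the structural identity
\[
\psi_{T_n}(u) = \max_{v \,:\, v \sim u} \bigl|(T_n, u)_{v\downarrow}\bigr|,
\]
with $v$ ranging over the neighbors of $u$; this holds because the subtrees $(T_n,u)_{v\downarrow}$ are nested as $v$ runs along a branch, so the maximum over all $v \neq u$ is attained at a neighbor. In the embedding, once a vertex $v$ is born at time $\tau_v$, its descendant subtree $(T,v_1)_{v\downarrow}$ evolves as a CMJ process driven by $\xi$, independent of the rest of the tree given $\tau_v$. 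Hence, writing $Z^{(v)}_s$ for the size of that subtree $s$ units of time after $v$'s birth, Theorem~\ref{thm: ABC} gives $e^{-\theta s} Z^{(v)}_s \to W_v$ almost surely, with the $W_v$ i.i.d.\ copies of $W$. Normalizing by the total size and using $e^{-\theta t}Z_t \to W$, I obtain that for every directed edge $u \to v$ the proportion $\bigl|(T_n,u)_{v\downarrow}\bigr|/n$ converges almost surely to a limit $p(u\to v) \in (0,1)$, and these limiting weights decompose $W$ additively across each edge cut.

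Next I would carry out the coarse, first-order step. For each vertex $u$ set $L(u) = \max_{v \sim u} p(u \to v)$, the limiting normalized largest branch at $u$. Moving outward along any infinite ray forces the branch pointing back toward the bulk of the tree to have weight approaching $1$, so $L(u) \to 1$ along every infinite path; consequently only finitely many vertices satisfy $L(u) < 1/2 + \delta$ for any fixed $\delta > 0$. This confines the competition for centrality to a finite, almost-surely well-defined set of candidates. I would then identify $v^*$ as the minimizer of $L$ over all vertices and argue that it is almost surely unique: because $W$ is a continuous random variable supported on $\mathbb R_+$ (Theorem~\ref{thm: ABC}), for any fixed edge the two cut-masses are distinct with probability one, and a countable union over edges shows that no exact mass-tie occurs anywhere. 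This rules out a degenerate evenly split configuration and pins down a single balance point $v^*$ at which every incident branch has limiting weight strictly below $1/2$.

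For every vertex $u$ with $L(u) > L(v^*)$, almost-sure convergence of the normalized quantities immediately yields $\psi_{T_n}(v^*) < \psi_{T_n}(u)$ for all large $n$, which is the desired terminal domination. The remaining and most delicate case is a neighbor $u \sim v^*$ for which the coarse weights tie, i.e.\ $p(v^* \to u) = p(u \to v^*) = 1/2$ would make $L(v^*) = L(u)$ and the normalized comparison uninformative. Here I would pass from proportions to the \emph{unnormalized} difference: cutting the edge $(v^*,u)$ splits $T_n$ into two parts whose sizes $|A_n|, |B_n|$ are, up to lower-order terms, two independent exponentially growing CMJ populations, and Theorem~\ref{thm: ABC} gives $e^{-\theta t}\bigl(|A_n| - |B_n|\bigr) \to M_A - M_B$ almost surely, a difference of independent continuous limiting masses that is nonzero with probability one. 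Thus the sign of $\psi_{T_n}(v^*) - \psi_{T_n}(u)$ stabilizes, and placing $v^*$ on the heavier side makes it eventually strictly more central.

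The main obstacle is precisely this last comparison between almost-balanced neighbors: the first-order analysis cannot distinguish them, so one must show that the genuine, unnormalized fluctuation of subtree sizes has an almost-surely determined sign for large $n$. This is exactly where the full strength of Theorem~\ref{thm: ABC} is needed---not merely that $e^{-\theta t}Z_t$ converges, but that the limit $W$ is strictly positive and has no atoms, so that differences of independent limiting masses avoid zero almost surely and no permanent tie survives. A secondary difficulty is making the ``finitely many candidates'' reduction uniform: I must rule out, almost surely, the existence of deep vertices that are momentarily more central than $v^*$ at arbitrarily large times, which requires a quantitative lower bound on $L(u)$ for vertices far from $v^*$ together with control of the minimal branch weight over the infinitely many such vertices.
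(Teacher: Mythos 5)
Your overall architecture matches the paper's: reduce the competition to a finite candidate set, then use the a.s.\ convergence of Malthusian-normalized subtree sizes to continuous limits (Theorem~\ref{thm: ABC}) to break every pairwise tie. The pairwise step is essentially correct and is exactly the paper's first lemma in the proof: by Lemma~\ref{lemma: jogloh2.1} comparing $\psi_{T_n}(u)$ and $\psi_{T_n}(v)$ reduces to comparing the two sides of the cut along the $u$--$v$ path, these are independent CMJ populations once both vertices exist, and $\mathbb P(W^u = W^v)=0$ because the limits have no atoms. (Incidentally, what you flag as the ``most delicate case''---almost-balanced neighbors---is therefore the easy part; your normalized/unnormalized distinction is unnecessary, since the ratio $|(T_m,v)_{u\downarrow}|/|(T_m,u)_{v\downarrow}|$ already converges to $W^u/W^v \neq 1$ a.s.)

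The genuine gap is the finiteness reduction, which you dismiss as a ``secondary difficulty'' but which is the central technical content of the theorem. Your argument---``$L(u)\to 1$ along every infinite ray, hence only finitely many vertices have $L(u)<1/2+\delta$''---is not a valid deduction. In the limit tree $\cup_n T_n$ every vertex has infinitely many children (the point process $\xi$ produces infinitely many births a.s.), so the set $\{u : L(u) < 1/2+\delta\}$ being infinite is not ruled out by behavior along rays: it could, for instance, consist of infinitely many late-born children of a single vertex, and no K\H{o}nig-type argument applies. Controlling $L$ simultaneously over infinitely many vertices requires a summable quantitative bound followed by Borel--Cantelli. This is precisely what the paper supplies: Lemma~\ref{lemma: jogloh3} shows the complement of a newly born vertex $v_{n+1}$ already contains at least $n/2$ vertices, so for $v_{n+1}$ ever to become more central than the current centroid its single-vertex subtree must overtake a sum of $n/2$ i.i.d.\ CMJ populations; Lemma~\ref{lemma: poling} (a truncation-plus-Hoeffding bound using $\E[W^2]<\infty$, which comes from the $L^2$ convergence in Theorem~\ref{thm: ABC}) bounds this probability by $C/n^2$; and Borel--Cantelli then gives $|\cC_{\text{CAN}}|<\infty$ a.s. Without this (or an equivalent tail estimate on $W$), your candidate set is not known to be finite, your minimizer $v^*$ of $L$ is not known to exist, and the proof does not close. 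A further small omission: even granting finiteness, you should argue (as the paper does in its step (iv), via the chain $u_0, u_1, u_2,\dots$ of centroids-at-birth terminating at $v_1$) that vertices \emph{outside} the candidate set are also eventually dominated by $v^*$; your claim that $L(u) > L(v^*)$ forces domination handles only the case of a strict first-order gap.
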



The statement and proof of Theorem~\ref{ThmPersist} may be compared to the results obtained in our previous work~\cite{JogLoh15}, which establish persistent centrality for the special cases $\alpha = 0$ and $\alpha = 1$. For a subtree $T$, define the attractiveness of $T$ as the sum of the attraction functions evaluated at each vertex of $T$. In the case of uniform attachment, the attractiveness of $T$ is simply $|T|$, whereas for linear preferential attachment, it is the sum of the degrees of the vertices, which is $2|T|-1$. The linearity of attractiveness in $|T|$ was critical to obtaining sharp bounds on the diagonal crossing probability of certain random walks. When $\alpha \in (0,1)$, however, the attractiveness of $T$ is no longer a function of $|T|$ alone, rendering the methods of our previous work defunct. In the present paper, we leverage a continuous time embedding and convergence results for CMJ processes to prove terminal centrality for a large class of sublinear preferential attachment trees, with the tradeoff being a slightly weaker theoretical guarantee.

\begin{proof} [Proof of Theorem~\ref{ThmPersist} (sketch)]

The key steps of the proof are as follows:
\begin{itemize}
\item[(i)] Identify a necessary condition that a vertex must satisfy in order to be a terminal centroid.
\item[(ii)] Show that the set of vertices satisfying the condition in (i), called the set of \emph{candidate terminal centroids} and denoted by $\cC_{\text{CAN}}$, is nonempty and finite with probability 1.
\item[(iii)] Show that among the set of candidate terminal centroids, a unique vertex emerges that eventually becomes more central than any other candidate.
\item[(iv)] Show that the vertex in (iii) is the unique terminal centroid.
\end{itemize}

We first describe the necessary condition in step (i). (For an illustration, see Figure~\ref{FigVstar}.) Let $v^*(n)$ be a centroid of the tree $T_n$. If $T_n$ has two centroids, we choose $v^*(n)$ to be the younger vertex from among the two. If vertex $v_{n+1}$ is a terminal centroid, it must necessarily become more central than $v^*(n)$ after a finite amount of time. Consequently, let
\begin{equation*}
\cC_{\text{CAN}} := \{v_{n+1}: \exists M \text{ s.t. } \psi_{T_m}(v_{n+1}) < \psi_{T_m}(v^*(n)) \quad \forall m \ge M\},
\end{equation*}
and define $\cE_n$ to be the event $\{v_{n+1} \in \cC_{\text{CAN}}\}$. We follow the convention of considering $v_1$ to be a candidate terminal centroid; in particular, $\cC_{\text{CAN}} \neq \phi$.

In fact, for $n >1$, either $v^*(n)$ or $v_{n+1}$ eventually becomes more central than the other, which follows from the following lemma:
\begin{lemma*}\label{lemma: one of them wins}	
For any two vertices $u$ and $v$, there exists a time $M$ such that either $\psi_{T_m}(u) < \psi_{T_m}(v)$ or $\psi_{T_m}(u) > \psi_{T_m}(v)$ holds for all $m > M$, almost surely.
\end{lemma*}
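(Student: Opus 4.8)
The plan is to pass to the continuous-time CMJ embedding and show that, after normalization by $e^{-\theta t}$, the difference $\psi_{T_t}(u)-\psi_{T_t}(v)$ converges almost surely to a limit that avoids $0$; this avoidance of $0$ is exactly what forces the comparison to stabilize. Writing $T_t$ for the continuous-time tree and $\sigma_1<\sigma_2<\cdots$ for the birth times of successive vertices, we have $T_m=T_{\sigma_m}$ with $\sigma_m\to\infty$, so it suffices to work with $t\to\infty$: between consecutive birth times the tree, and hence each $\psi_{T_t}(\cdot)$, is constant, so almost-sure stabilization of the sign of $\psi_{T_t}(u)-\psi_{T_t}(v)$ immediately produces a (random) time $M$ after which the same strict inequality holds at all integer indices $m>M$. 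Since $u$ and $v$ are both present after a finite time, I restrict to $t$ beyond that point.

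First I would establish a limit for the normalized balancedness of a single fixed vertex. For a neighbor $w$ of $u$, the branch $(T_t,u)_{w\downarrow}$ is of one of two types. The countably many \emph{forward} branches (one for each child of $u$ in the tree rooted at the ancestor $v_1$) are genuine delayed CMJ subtrees, so by Theorem~\ref{thm: ABC} each satisfies $e^{-\theta t}|(T_t,u)_{w\downarrow}|\to m_w$ almost surely, where $m_w=e^{-\theta\sigma_w}W_w$ for an independent, continuous, strictly positive $W_w$; since $\sigma_w\to\infty$ along the children, these masses are summable (their sum is at most $W$) and tend to $0$. The single \emph{backward} branch (present when $u\neq v_1$) has size $Z_t$ minus the size of the subtree below $u$, so $e^{-\theta t}|\text{backward branch}|\to W-M_u$, where $M_u$ denotes the limit mass of the subtree rooted at $u$. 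Because the late forward branches carry vanishing mass, one obtains $e^{-\theta t}\psi_{T_t}(u)\to\Psi_u:=\max\left(\sup_w m_w,\ W-M_u\right)$ almost surely, with the supremum attained; the analogous statement holds for $v$ with limit $\Psi_v$.

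The crux, and the step I expect to be hardest, is to prove that $\Psi_u\neq\Psi_v$ almost surely, since only then does $e^{-\theta t}\bigl(\psi_{T_t}(u)-\psi_{T_t}(v)\bigr)\to\Psi_u-\Psi_v$ have a definite sign. The key structural observation is that each value entering the two maxima is the limiting mass of a prescribed vertex-region of $T_\infty$ (a union of subtrees), and that the regions realizing the maxima at $u$ and at $v$ are never the same vertex set once $u\neq v$: a branch at $u$ either contains $v$ or is separated from $v$ by $u$, and matching such a region against a branch at $v$ leads to a contradiction. Any two distinct regions have masses differing by the mass of their nonempty symmetric difference, and I would argue this difference is almost surely nonzero by conditioning on all of the CMJ data except the clocks governing one sub-region lying entirely on one side: conditionally, that sub-region contributes an independent, continuously distributed, strictly positive mass to one of the two quantities but not the other (or with a fixed sign, as when one region is nested in the other and $M_u-M_v$ absorbs the continuous mass of the intervening path together with its off-path branches). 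Hence each fixed pairing of argmax regions gives a conditionally null equality event, and a union bound over the countably many possible argmax identities and discrete structures shows $\Psi_u=\Psi_v$ has probability $0$.

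Finally, conditioning on $\Psi_u-\Psi_v\neq 0$, suppose without loss of generality $\Psi_u>\Psi_v$. Then for all sufficiently large $t$ we have $e^{-\theta t}\bigl(\psi_{T_t}(u)-\psi_{T_t}(v)\bigr)>\tfrac12(\Psi_u-\Psi_v)>0$, whence $\psi_{T_t}(u)>\psi_{T_t}(v)$; evaluating at $t=\sigma_m$ yields $\psi_{T_m}(u)>\psi_{T_m}(v)$ for all $m$ beyond some finite $M$, which is the claim. The main obstacle is genuinely the no-ties argument: care is needed both in handling the maximum over infinitely many forward branches and in correctly identifying, in every configuration (including the nested case where $u$ is an ancestor of $v$), the independent continuous mass that separates the two limiting values.
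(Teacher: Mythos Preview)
Your approach is correct in outline, but it takes a genuinely harder route than the paper does, and the difficulty you flag in your last paragraph is exactly the step the paper sidesteps entirely.

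The paper's proof never computes $\Psi_u$ or $\Psi_v$. Instead it invokes a purely combinatorial fact about trees (Lemma~\ref{lemma: jogloh2.1}): for any two vertices $u,v$,
\[
\psi_T(u)\le\psi_T(v)\quad\Longleftrightarrow\quad |(T,v)_{u\downarrow}|\ge |(T,u)_{v\downarrow}|.
\]
This collapses the comparison of the two \emph{maxima} $\psi_T(u),\psi_T(v)$ (each a maximum over all branches at a vertex) to a comparison of just \emph{two} subtree sizes, namely the two pieces obtained by cutting the $u$--$v$ path at its first edge. Restarting the CMJ process at the birth of the younger vertex, these two pieces are independent CMJ populations, so Theorem~\ref{thm: ABC} gives $e^{-\theta t}|(T_t,v)_{u\downarrow}|\to W^u$ and $e^{-\theta t}|(T_t,u)_{v\downarrow}|\to W^v$ with $W^u,W^v$ independent and continuous. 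Then $\mathbb P(W^u=W^v)=0$ is immediate, and the sign of $|(T_m,v)_{u\downarrow}|-|(T_m,u)_{v\downarrow}|$ stabilizes; by the equivalence above, so does the sign of $\psi_{T_m}(u)-\psi_{T_m}(v)$.

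By contrast, your argument first establishes $e^{-\theta t}\psi_{T_t}(u)\to\Psi_u$ (which is fine, and your control of the tail of forward branches via $\sum_w m_w\le W$ is the right idea), but then must show $\Psi_u\neq\Psi_v$ for dependent random variables built as maxima over overlapping families of regions. Your countable union over argmax identities together with a conditional-continuity argument on a sub-region in the symmetric difference does work, but it requires a case analysis (disjoint branches, nested branches of the form $S_x$ versus $S_y$, branches of the form $S_x$ versus $S_y^c$, etc.) and the observation that every vertex a.s.\ has infinitely many children so the symmetric difference always carries some independent continuous mass. None of this is wrong, but it is exactly the complexity that Lemma~\ref{lemma: jogloh2.1} eliminates in one stroke: rather than comparing two maxima and untangling their dependence, the paper reduces to two independent scalars.
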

\begin{proof}
Without loss of generality, assume $u$ is born before $v$. Let $T^v$ and $T^u$ denote the trees $(T_m, u)_{v \downarrow}$ and $(T_m, v)_{u \downarrow}$, where $m$ is the time of birth of $v$. Note that $T^v$ consists of the single vertex $v$. We now restart the process in continuous time; i.e., we start independent CMJ processes initiated from the starting states $T^u$ and $T^v$. Using Theorem \ref{thm: ABC}, we have the a.s. convergence result
\begin{equation}\label{eq: tu by tv}
\frac{|(T_m, v)_{u \downarrow}|}{|(T_m, u)_{v \downarrow}|} \stackrel{a.s.} \longrightarrow \frac{W^u}{W^v},
\end{equation}
for absolutely or singular continuous independent random variables $W^u$ and $W^v$, whose distributions are determined by the structure of the starting states $T^u$ and $T^v$, respectively. Since $W^u - W^v$ cannot have point masses, we have
\begin{equation*}
\mathbb P(W^u = W^v) = \mathbb P \left(W^u - W^v= 0 \right) = 0.
\end{equation*}
Thus, either $W^u > W^v$ or $W^u < W^v$, almost surely. The almost sure convergence in equation \eqref{eq: tu by tv}  implies that there exists $M>0$ such that either $|(T_m, v)_{u \downarrow}| > |(T_m, u)_{v \downarrow}|$ or  $|(T_m, v)_{u \downarrow}| < |(T_m, u)_{v \downarrow}|$, for all $m > M$. Applying Lemma~\ref{lemma: jogloh2.1} in Appendix~\ref{AppTrees} concludes the proof.
\end{proof}


\begin{figure}
\begin{center}
\includegraphics[scale = 0.5]{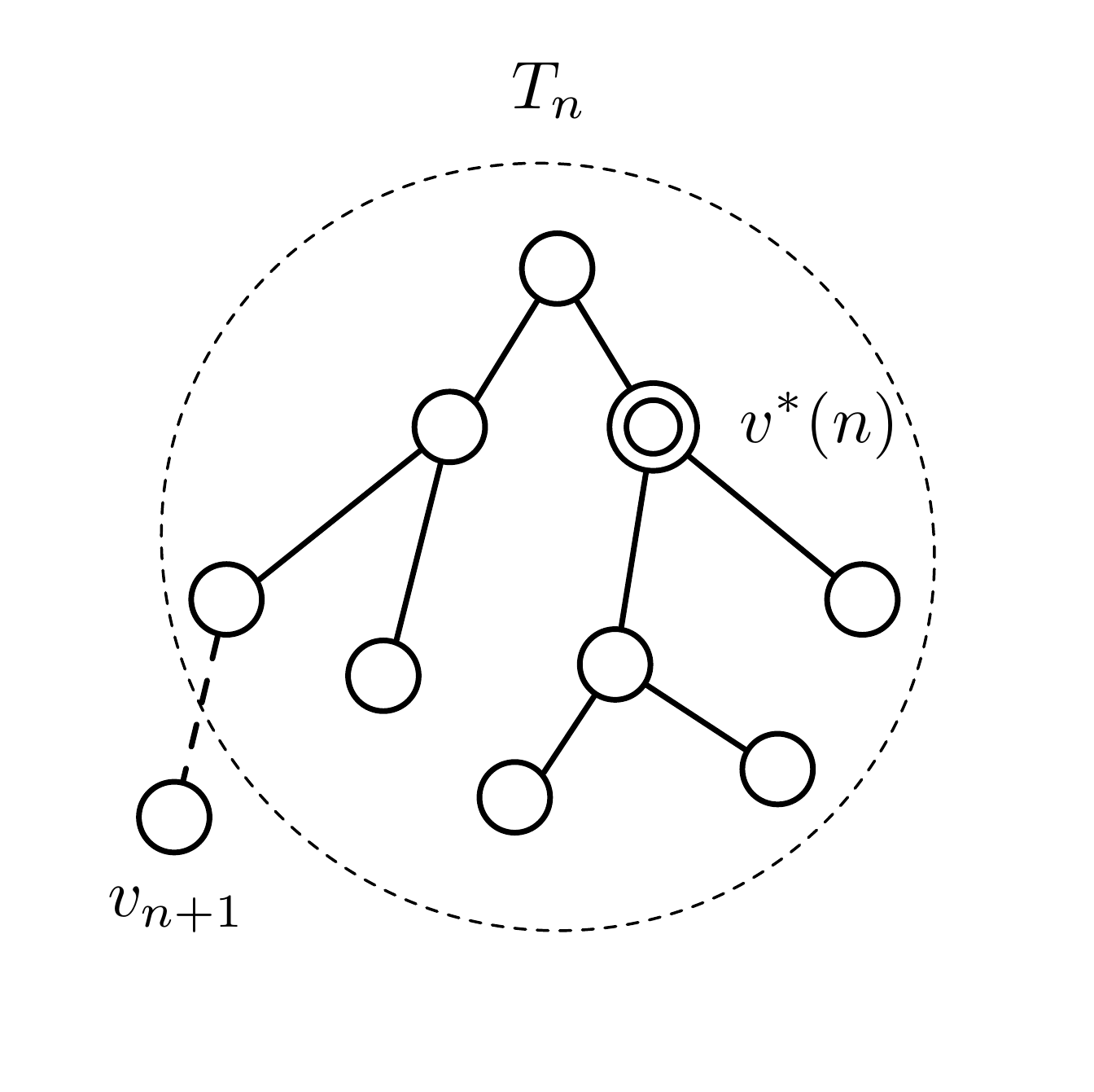}
\end{center}
\caption{The notation from Lemma \ref{lemma: finite} is illustrated above. The centroid of tree  $T_n$ is $v^*(n)$, and $v_{n+1}$ is the newest vertex joining $T_n$ to form $T_{n+1}$. }
\label{FigVstar}
\end{figure}

The following lemma furnishes the result in step (ii):

\begin{lemma*}\label{lemma: finite}
$|\cC_\text{CAN}| < \infty$, with probability 1.
\end{lemma*}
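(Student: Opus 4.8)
The plan is to prove $\mathbb P(\cE_n \text{ occurs for infinitely many } n) = 0$ and invoke the Borel--Cantelli lemma. Since $\cC_{\text{CAN}}$ consists of $v_1$ together with those $v_{n+1}$ for which $\cE_n$ occurs, the finiteness $|\cC_{\text{CAN}}| < \infty$ is equivalent to only finitely many of the events $\cE_n$ occurring. Thus it suffices to show $\sum_{n} \mathbb P(\cE_n) < \infty$.

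First I would extract a tractable necessary condition for $\cE_n$. Fix $n$ and, for $m > n$, root $T_m$ at $v^*(n)$; let $A_m$ denote the subtree consisting of $v_{n+1}$ together with all its descendants (the component containing $v_{n+1}$ after deleting the edge to its parent $p$), so that the complement forms a single subtree hanging off $v_{n+1}$ and $\psi_{T_m}(v_{n+1}) \ge m - |A_m|$. Using the unimodality of $\psi$ along paths and the fact that a tree has at most two adjacent centroids (the tree lemmas of Appendix~\ref{AppTrees}), the event that $v_{n+1}$ eventually becomes more central than $v^*(n)$ forces the centroid of $T_m$ to lie on the $v_{n+1}$-side of the edge $(p, v_{n+1})$ for all large $m$; since the centroid always lies in the larger of the two components obtained by deleting an edge, this forces $|A_m| \ge m/2$ eventually. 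Hence $\cE_n$ is contained in the event that the subtree seeded at $v_{n+1}$ eventually occupies at least half of $T_m$.

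Next I would quantify this subtree-size event through the continuous-time embedding. Restarting the process in continuous time at the (random) birth time $\tau_{n+1}$ of $v_{n+1}$, as in the proof of the preceding lemma, the descendants of $v_{n+1}$ evolve as an independent CMJ process seeded by a single individual; by Theorem~\ref{thm: ABC} and the ratio convergence~\eqref{eq: tu by tv}, the fraction of $T_m$ occupied by this subtree converges almost surely,
\begin{equation*}
\frac{|A_m|}{m} \stackrel{a.s.}{\longrightarrow} \frac{W_{v_{n+1}}}{\,n+1\,},
\end{equation*}
where $W_{v_{n+1}}$ is a fresh copy of the single-ancestor limit $W$ and the normalization $e^{-\theta \tau_{n+1}} \asymp W/(n+1)$ comes from applying $e^{-\theta t} Z_t \to W$ along the subsequence $t = \tau_{n+1}$, at which $Z_{\tau_{n+1}} = n+1$. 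Consequently $\{|A_m| \ge m/2 \text{ eventually}\} \subseteq \{W_{v_{n+1}} \ge (n+1)/2\}$, and since $W_{v_{n+1}}$ has the same law as $W$ we obtain $\mathbb P(\cE_n) \le \mathbb P(W \ge (n+1)/2)$. Because Theorem~\ref{thm: ABC} gives $L^2$, hence $L^1$, convergence, $\mathbb E[W] < \infty$, so $\sum_n \mathbb P(\cE_n) \le \sum_n \mathbb P(2W \ge n+1) \le 2\,\mathbb E[W] < \infty$, and Borel--Cantelli finishes the argument.

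I expect the main obstacle to be the middle step: rigorously converting the $\psi$-comparison defining $\cE_n$ into the clean subtree-majority condition $|A_m| \ge m/2$, since $v^*(n)$ is itself random and is only the centroid of $T_n$, so one must track the location of the centroid of $T_m$ relative to the edge $(p,v_{n+1})$ using the at-most-two-centroids and path-unimodality lemmas. A secondary subtlety is justifying the normalization and the independence of the fresh limit $W_{v_{n+1}}$ from the past: the branching (i.i.d.-subtree) property of the CMJ process is what makes $W_{v_{n+1}}$ a genuine copy of $W$, and the absolute/singular continuity and strict positivity of $W$ from Theorem~\ref{thm: ABC} are needed both to pin the limiting fraction to a nondegenerate value and to control the boundary case where $|A_m|$ hovers near $m/2$.
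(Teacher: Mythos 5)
Your overall skeleton---bound $\sum_n \mathbb{P}(\cE_n)$ and invoke Borel--Cantelli---is the same as the paper's, but the step that converts $\cE_n$ into a tractable event is false. You assert that if $v_{n+1}$ eventually becomes more central than $v^*(n)$, then the centroid of $T_m$ must eventually lie in the subtree $A_m$ of descendants of $v_{n+1}$, whence $|A_m| \ge m/2$. This is not a valid implication: $v^*(n)$ is only the centroid of $T_n$, and at later times the actual centroid of $T_m$ can sit in a third subtree hanging off a vertex between $v^*(n)$ and $v_{n+1}$, in which case $\psi_{T_m}(v_{n+1}) < \psi_{T_m}(v^*(n))$ can hold while $|A_m|$ stays tiny. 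Concretely, take the path $v^*(n) - u - v_{n+1}$, attach one extra vertex to $v^*(n)$ on the far side, a subtree of $1000$ vertices to $u$, and a subtree of $3$ vertices (including $v_{n+1}$) below $v_{n+1}$; then $\psi(v_{n+1}) = 1003 < 1004 = \psi(v^*(n))$, yet $|A_m| = 3$ out of $m = 1006$. The correct reduction, via Lemma~\ref{lemma: jogloh2.1}, is $\psi_{T_m}(v_{n+1}) < \psi_{T_m}(v^*(n)) \iff |(T_m,v^*(n))_{v_{n+1}\downarrow}| > |(T_m,v_{n+1})_{v^*(n)\downarrow}|$ --- a comparison of two subtrees that do \emph{not} partition $T_m$ --- so $\cE_n$ is a race between $A_m$ and $B_m := (T_m,v_{n+1})_{v^*(n)\downarrow}$, not between $A_m$ and the whole tree. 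Your entire quantitative bound, which reduces everything to the tail $\mathbb{P}(W \ge (n+1)/2)$, rests on the false reduction and therefore collapses.

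The second step is also not justified as written: $e^{-\theta\tau_{n+1}}(n+1) \to W_{\mathrm{tot}}$ holds only in the limit $n \to \infty$, so for a fixed $n$ you cannot replace $e^{\theta\tau_{n+1}}W_{\mathrm{tot}}$ by $n+1$ to obtain a per-$n$ bound $\mathbb{P}(\cE_n) \le \mathbb{P}(W \ge (n+1)/2)$; moreover $W_{\mathrm{tot}}$ is the limit of the whole population and is not independent of the fresh copy $W_{v_{n+1}}$. The paper sidesteps both issues: it applies Lemma~\ref{lemma: jogloh2.1} as above, uses Lemma~\ref{lemma: jogloh3} to guarantee $|B_{n+1}| \ge n/2$ at the moment $v_{n+1}$ is born, stochastically dominates $|B_t|$ from below by a sum of $n/2$ i.i.d.\ single-seed CMJ processes, and then compares the limit variables $W_Y$ and $\sum_{i=1}^{n/2} W_i$ using the Hoeffding/Markov bound of Lemma~\ref{lemma: poling} to conclude $\mathbb{P}(\cE_n) \le C/n^2$. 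To repair your argument you would need to race the two growing subtrees against each other in this way rather than against a deterministic fraction of $m$.
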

 
 \begin{proof}
We first show that any node joining the tree sufficiently late has a very small chance of belonging to $\cC_{\text{CAN}}$. By Lemma~\ref{lemma: jogloh2.1} in Appendix~\ref{AppTrees}, the event $\cE_n$ occurs if and only if there exists $M > 0$ such that for all $m \ge M$, 
\begin{equation}\label{eq: jogloh2.1}
|(T_m, v^*(n))_{v_{n+1}\downarrow}| > |(T_m, v_{n+1})_{v^*(n)\downarrow}|.
\end{equation}
To simplify notation, we define $A_m := (T_{m}, v^*(n))_{v_{n+1}\downarrow}$ and $B_m := (T_m, v_{n+1})_{v^*(n)\downarrow}$, for $m \geq n+1$. Lemma~\ref{lemma: jogloh3} in Appendix~\ref{AppTrees} implies that at time $m=n+1$, the number of vertices in $B_m$ is at least $\frac{n}{2}$. Thus, $B_m$ has a large lead over $A_m$, which has only one vertex. At time $n+1$, we pause the process in discrete time and restart it in continuous time, with state at $t=0$ being the state at the (discrete) time $n+1$. Observe that if a time $M$ exists such that inequality \eqref{eq: jogloh2.1} holds, a time $\Gamma > 0$ must also exist such that the continuous time trees satisfy $|A_\tau| > |B_\tau|$, for all $t > \Gamma$. 

Note that the population $|A_t|$ is simply a sublinear preferential attachment process started from a single vertex, which we denote by $Y(t)$. The population $|B_t|$ stochastically dominates the sum of $\frac{n}{2}$ independent sublinear preferential attachment processes starting from a single vertex, which we subsequently denote by $X_1(t), \dots, X_{n/2}(t)$. Thus, the probability that $\cE_n$ occurs is upper-bounded by the probability that $Y(t)$ eventually becomes larger than $\sum_{i=1}^{n/2} X_i(t)$. By Theorem \ref{thm: ABC}, the rescaled processes $e^{-\theta t}Y(t)$ and $e^{-\theta t} X_i(t)$, for $1 \leq i \leq n/2$, all converge a.s.\ to i.i.d.\ random variables, which we denote by $W_Y$ and $\{W_i\}_{1 \le i \le n/2}$, respectively. Thus, the probability that $Y(t)$ eventually becomes larger than $\sum_{i=1}^{n/2} X_i(t)$ is equal to the probability that $W_Y$ is greater than $\sum_{i=1}^{n/2} W_i$. Using Lemma~\ref{lemma: poling} in Appendix~\ref{appendix: wall}, we conclude that this probability  is upper-bounded by $\frac{C}{n^2}$, for some constant $C$. Finally, since $\sum \frac{1}{n^2}$ is a convergent sequence, the Borel-Cantelli lemma implies that with probability 1, only finitely many events $\cE_n$ occur, completing the proof.
 \end{proof}

For step (iii), we simply note that Lemma \ref{lemma: one of them wins} implies a fixed ordering via centrality for any two vertices. Thus, if we have a finite set such as $\cC_{\text{CAN}}$, a repeated application of Lemma \ref{lemma: one of them wins} to members of this set  yields a fixed ordering from the most central to the least central vertices in $\cC_\text{CAN}$. Let $v^*$ be the most central vertex from the set $\cC_{\text{CAN}}$ that emerges from this ordering. Step (iv) is provided by the following lemma:

\begin{lemma*}\label{lemma: frog}
The vertex $v^*$ is the unique terminal centroid.
\end{lemma*}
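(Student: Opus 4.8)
The plan is to work on the almost-sure event on which the ``eventual centrality'' relation is a strict total order on $\cup_n V(T_n)$, and then to show that the most central candidate $v^*$ is the global minimum of this order. Write $u \prec v$ to mean that $u$ is eventually strictly more central than $v$, i.e.\ there exists $M$ with $\psi_{T_m}(u) < \psi_{T_m}(v)$ for all $m \ge M$. Lemma~\ref{lemma: one of them wins} shows that any two distinct vertices are $\prec$-comparable, and since the relation is phrased directly in terms of the eventual ordering of the integer values $\psi_{T_m}(\cdot)$, transitivity is automatic: if $\psi_{T_m}(a) < \psi_{T_m}(b)$ and $\psi_{T_m}(b) < \psi_{T_m}(c)$ both hold for all large $m$, then so does $\psi_{T_m}(a) < \psi_{T_m}(c)$. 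Antisymmetry is immediate for the same reason. Thus on a single almost-sure event $\Omega_0$—the countable intersection, over all pairs of vertices, of the conclusions of Lemma~\ref{lemma: one of them wins}, intersected with the event of Lemma~\ref{lemma: finite}—the relation $\prec$ is a strict total order and $\cC_{\text{CAN}}$ is finite, so $v^* = \min_\prec \cC_{\text{CAN}}$ is well defined. All remaining reasoning is deterministic on $\Omega_0$.

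Next I would record the two translations between the defining condition of $\cC_{\text{CAN}}$ and the order $\prec$. First, $v^*$ being the $\prec$-minimum of $\cC_{\text{CAN}}$ gives $v^* \prec u$ for every candidate $u \ne v^*$, which already settles the terminal-centrality inequality for all $u \in \cC_{\text{CAN}}$. Second, for a non-candidate $u = v_{n+1} \notin \cC_{\text{CAN}}$, the event $\cE_n$ fails, so $v_{n+1}$ does not eventually beat the centroid $v^*(n)$ of $T_n$; Lemma~\ref{lemma: one of them wins} then forces $v^*(n) \prec v_{n+1} = u$. These are exactly the two facts that drive the main step.

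The heart of the argument is a minimal-counterexample reduction establishing that $v^* \prec u$ for \emph{every} $u \ne v^*$, i.e.\ that $v^*$ is a global $\prec$-minimum and hence a terminal centroid. Suppose not, and among all vertices $u \ne v^*$ with $u \prec v^*$ choose one of minimal birth time, say $u = v_{n+1}$. Since $v^* = \min_\prec \cC_{\text{CAN}}$, every candidate $c \ne v^*$ satisfies $v^* \prec c$; in particular, were $u$ a candidate we would get $v^* \prec u$, contradicting $u \prec v^*$, so $u \notin \cC_{\text{CAN}}$. The second translation then gives $v^*(n) \prec u$, and combining with $u \prec v^*$ and transitivity yields $v^*(n) \prec v^*$. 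Hence $v^*(n)$ is itself a counterexample; but $v^*(n) \in V(T_n)$ has birth time at most $n < n+1$, contradicting the minimality of the birth time of $u$. Therefore no counterexample exists and $v^* \prec u$ for all $u \ne v^*$, so $v^*$ is a terminal centroid.

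Uniqueness is then immediate: a terminal centroid is precisely a global $\prec$-minimum, and a strict total order has at most one minimum (if $v^{**} \ne v^*$ were a second terminal centroid, then $v^* \prec v^{**}$ and $v^{**} \prec v^*$ would hold simultaneously, which is impossible). I expect the main obstacle to be the minimal-counterexample step above: the crucial observation making it close is that a non-candidate counterexample $u$ can always be traded for the centroid $v^*(n)$ present when $u$ was born, which is \emph{strictly more central} than $u$ yet \emph{older}, so the induction on birth time cannot regress indefinitely. The only other point requiring care is the construction of $\Omega_0$—verifying, via the absolute or singular continuity of the limits $W$ in Theorem~\ref{thm: ABC}, that no two vertices tie in the limit—so that $\prec$ is a genuine strict total order and transitivity may be applied freely.
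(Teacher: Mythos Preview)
Your proof is correct and follows essentially the same idea as the paper's: both arguments exploit that a non-candidate $u=v_{n+1}$ is eventually beaten by the strictly older vertex $v^*(n)$, and then descend along birth times until a candidate is reached. The paper phrases this as an explicit chain $u_0,u_1,\dots,u_r$ terminating in $\cC_{\text{CAN}}$, while you phrase the same descent as a minimal-birth-time counterexample; these are dual formulations of the same argument, and your slightly more explicit handling of the total order $\prec$ on the almost-sure event $\Omega_0$ is a clean way to package the repeated appeals to Lemma~\ref{lemma: one of them wins}.
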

\begin{proof}
Let $u_0 \neq v^*$ be any vertex. If $u_0 \in \cC_{\text{CAN}}$, the choice of $v^*$ implies that $v^*$ eventually becomes more central than $u_0$. Thus, we assume $u_0 \notin \cC_{\text{CAN}}$, meaning the centroid at the time vertex $u_0$ was born, which we denote by $u_1$, eventually becomes more central than $u_0$ in the limit. If $u_1 \in \cC_{\text{CAN}}$, then $v^*$ eventually becomes more central than $u_1$, which in turn eventually becomes more central than $u_0$, as wanted. If instead $u_1 \notin \cC_{\text{CAN}}$, we may consider $u_2$, which is the centroid when $u_1$ was born. Continuing in this manner, we define a sequence $u_0, u_1, u_2, \dots$ of progressively older, which is necessarily finite, with the last vertex in the sequence being $v_1$. 
Thus, if we define
\begin{equation*}
r = \min_{i \geq 0}\left\{ u_{i} \in \cC_{\text{CAN}} \right\},
\end{equation*}
then $u_r$ is well-defined. We then have that $v^*$ is more central than $u_r$, which is more central than $u_{r-1}$, which is more central than $u_{r-2}$, and so on, continuing up to $u_0$. This completes the proof.
\end{proof}
This also completes the proof of Theorem \ref{ThmPersist}.
\end{proof}

In fact, Theorem~\ref{ThmPersist} may be extended to establish the existence of a fixed set of size $K>0$ consisting of the most terminally central vertices. This is summarized in the following theorem:
\begin{theorem}
\label{ThmTopK}
For any $K \ge 1$, a unique set of distinct vertices $\{v^*_1, v^*_2, \dots, v^*_K\}$ exists such that for any other vertex $u \in \cup_{n=1}^\infty V(T_n)$, there exists a time $M$ (possible dependent on $u$) such that
\begin{align*}
\psi_{T_n}(v^*_1) < \psi_{T_n}(v^*_2) < \dots < \psi_{T_n}(v^*_K) < \psi_{T_n}(u),
\end{align*}
for all $n \geq M$.
\end{theorem}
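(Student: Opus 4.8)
The plan is to bootstrap Theorem~\ref{ThmTopK} from the machinery already developed for Theorem~\ref{ThmPersist}, proceeding inductively on $K$. The base case $K=1$ is precisely Theorem~\ref{ThmPersist}, which produces the unique terminal centroid $v^*_1$. The central observation is that the two ingredients driving the proof of Theorem~\ref{ThmPersist}---namely the pairwise dichotomy of Lemma~\ref{lemma: one of them wins} and the finiteness of $\cC_{\text{CAN}}$ from Lemma~\ref{lemma: finite}---already give us more than a single terminal centroid. Specifically, applying Lemma~\ref{lemma: one of them wins} to every pair of vertices in the finite set $\cC_{\text{CAN}}$ yields, with probability $1$, a \emph{total} centrality order on $\cC_{\text{CAN}}$ that is eventually respected: there is a labeling $v^*_1, v^*_2, \dots$ of the elements of $\cC_{\text{CAN}}$ such that for any two indices $i < j$ there is a finite time after which $\psi_{T_n}(v^*_i) < \psi_{T_n}(v^*_j)$. (Since $\cC_{\text{CAN}}$ is finite, there are only finitely many pairs, so a single finite time $M_0$ works simultaneously for all pairs within $\cC_{\text{CAN}}$.)

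First I would fix $K \ge 1$ and take $v^*_1, \dots, v^*_K$ to be the $K$ most central vertices of $\cC_{\text{CAN}}$ in this eventual order; this requires $|\cC_{\text{CAN}}| \ge K$, which I address below. The strict chain $\psi_{T_n}(v^*_1) < \cdots < \psi_{T_n}(v^*_K)$ then holds for all $n \ge M_0$ by the preceding paragraph. What remains is to show that each external vertex $u \notin \{v^*_1, \dots, v^*_K\}$ is eventually strictly less central than $v^*_K$. This is exactly the content of the argument in Lemma~\ref{lemma: frog}, which I would reuse almost verbatim: if $u \in \cC_{\text{CAN}}$, then $u$ sits below $v^*_K$ in the total order (since $v^*_1, \dots, v^*_K$ are the top $K$), so $\psi_{T_n}(v^*_K) < \psi_{T_n}(u)$ eventually. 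If $u \notin \cC_{\text{CAN}}$, I walk back along the chain of centroids-at-birth $u = u_0, u_1, u_2, \dots$ exactly as in Lemma~\ref{lemma: frog}, arriving at the first ancestor $u_r \in \cC_{\text{CAN}}$; transitivity of the eventual centrality order then gives that $v^*_K$ is eventually more central than $u_r$, which is eventually more central than $u_0 = u$. Taking the maximum of $M_0$ and the finitely many threshold times produced along this chain yields the required $M$.

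The one genuinely new point---and the step I expect to be the main obstacle---is verifying that $\cC_{\text{CAN}}$ contains at least $K$ vertices, so that the claimed set of distinct vertices actually exists. The finiteness direction is already handled by Lemma~\ref{lemma: finite}; what Theorem~\ref{ThmTopK} additionally needs is a lower bound on $|\cC_{\text{CAN}}|$ that holds almost surely for every fixed $K$. I would establish this by showing that $\cC_{\text{CAN}}$ is almost surely infinite, or more directly that with probability $1$ it has size at least $K$ for each $K$; the cleanest route is to argue that infinitely many early vertices satisfy the defining candidacy condition, or equivalently that $\sum_n \mathbb{P}(\cE_n) = \infty$ together with enough independence (or a second-moment / conditional Borel--Cantelli argument) to force infinitely many of the $\cE_n$ to occur. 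Intuitively the small-balanced early subtrees are genuine candidates with non-vanishing probability, whereas Lemma~\ref{lemma: finite} only rules out \emph{late} candidates; reconciling these two facts to pin down $|\cC_{\text{CAN}}|$ is the crux.

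Finally I would address uniqueness of the set $\{v^*_1, \dots, v^*_K\}$. Uniqueness follows from the fact that the eventual total order on all vertices is well-defined: Lemma~\ref{lemma: one of them wins} guarantees that for any two vertices one is eventually strictly more central than the other, and this relation is almost surely consistent (no ties, by the absolute/singular continuity of $W$ in Theorem~\ref{thm: ABC}), so the top $K$ vertices in this order are uniquely determined. Any other candidate set would have to contain a vertex strictly less central than some $v^*_i$ while excluding $v^*_i$, contradicting the required chain inequality. Assembling these pieces---the reused Lemma~\ref{lemma: frog} argument, the total-order extraction on the finite-but-large-enough $\cC_{\text{CAN}}$, and the lower bound on $|\cC_{\text{CAN}}|$---completes the proof.
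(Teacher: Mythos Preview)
Your proposal contains a genuine error at precisely the point you flag as the main obstacle. You propose to show that $|\cC_{\text{CAN}}| \geq K$ almost surely for every fixed $K$, ``the cleanest route'' being to show $\cC_{\text{CAN}}$ is almost surely infinite. But this directly contradicts Lemma~\ref{lemma: finite}, which you invoke elsewhere and which establishes $|\cC_{\text{CAN}}| < \infty$ almost surely. These cannot both hold. The set $\cC_{\text{CAN}}$ was engineered to trap the \emph{single} terminal centroid; there is no reason it should contain the second-, third-, or $K$-th most terminally central vertex, and on a typical realization it may well have fewer than $K$ elements.

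There is a second, related gap in your adaptation of Lemma~\ref{lemma: frog}. Even granting $|\cC_{\text{CAN}}| \geq K$, the backward chain $u = u_0, u_1, \dots, u_r$ terminates at the \emph{first} $u_r \in \cC_{\text{CAN}}$, and nothing prevents $u_r$ from equalling one of $v^*_1, \dots, v^*_{K-1}$. In that case the chain tells you $v^*_j$ (some $j < K$) is eventually more central than $u$, but says nothing about the relation between $v^*_K$ and $u$. You cannot conclude $\psi_{T_n}(v^*_K) < \psi_{T_n}(u)$ from transitivity, since $v^*_K$ sits \emph{below} $v^*_j$ in the order. So the frog argument, as written, does not extend past $K=1$.

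What is actually needed is a $K$-dependent analogue of $\cC_{\text{CAN}}$: rather than comparing a newborn vertex $v_{n+1}$ to the centroid $v^*(n)$, compare it to the $K$-th most central vertex of $T_n$ (or to the $K$ most central vertices simultaneously). One then reproves finiteness of this larger candidate set via a Borel--Cantelli argument analogous to Lemma~\ref{lemma: finite}---the key quantitative input being that the relevant subtree the newcomer must overtake still has size of order $n$, just with a constant depending on $K$---and reruns the frog argument with the enlarged set. This is the modification the paper alludes to when it cites the parallel argument in~\cite{JogLoh15}; your current bootstrap from the $K=1$ candidate set does not suffice.
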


\begin{proof}
The argument closely parallels that of the proof of Theorem 2 in our previous work~\cite{JogLoh15}, with appropriate modifications to prove terminal centrality instead of persistent centrality. We refer the reader to our earlier paper, noting that the argument only requires properties of absolute or singular continuity of the appropriately normalized subtree sizes, which are provided by Theorem~\ref{thm: ABC}.
\end{proof}

\section{Finite confidence set for the root}\label{section: adam}

For the results in this section, we limit our consideration to $\alpha$-sublinear preferential attachment trees. Recall that these are trees in which the attraction function is given by $f(i) = (1+i)^\alpha$, for $\alpha \in (0,1)$. The problem of finding a confidence set for the root node in the case of linear preferential and uniform attachment trees was studied by Bubeck et al.~\cite{Bubeck14}. One proposed method for constructing a confidence set that contains the root node with probability $1-\epsilon$ is as follows:
\begin{enumerate}
\item Given a sequence of random trees $\{T_n\}$, order the vertices according to the balancedness function $\psi_{T_n}$.
\item Select the $K$ vertices with the smallest values of $\psi_{T_n}$, for a proper value of $K = K(\epsilon)$.
\end{enumerate}
The above method was shown to produce finite-sized confidence sets in Bubeck et al.~\cite{Bubeck14}, and the analysis was later extended to diffusions over regular trees~\cite{KhimLoh15}. In fact, the continuous time analysis of sublinear preferential attachment trees also furnishes a method for bounding the required size of a confidence set for the root node.
Following the notation of Bubeck et al.~\cite{Bubeck14}, we use $H^K_\psi(T_n)$ to denote the set of $K$ vertices chosen according to the method described above, and drop the argument $T_n$ when the context is unambiguous. Our main result shows that the same estimator produces finite-sized confidence sets for sublinear preferential attachment trees:

\begin{theorem}\label{thm: adam}
For $\epsilon >0$, there exists a constant $K$ (depending on $\epsilon$) such that
\begin{equation*}
\lim\inf_{n\to \infty} \mathbb P\left(v_1 \in H^K_\psi(T_n)\right) \geq 1- \epsilon.
\end{equation*}
\end{theorem}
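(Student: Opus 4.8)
The plan is to show that the probability the root $v_1$ falls outside $H^K_\psi(T_n)$ can be made small uniformly in $n$ by choosing $K$ large. The event $\{v_1 \notin H^K_\psi\}$ means at least $K$ vertices are strictly more central than $v_1$, i.e.\ $\psi_{T_n}(u) < \psi_{T_n}(v_1)$ for at least $K$ distinct vertices $u$. The key observation is that for a vertex $u \neq v_1$, if we root the tree at $v_1$, then $\psi_{T_n}(u) < \psi_{T_n}(v_1)$ forces the subtree $(T_n, v_1)_{u \downarrow}$ hanging below $u$ to contain more than half the vertices — intuitively, $u$ can only be more balanced than the root if the mass of the tree has migrated into one of the root's children's subtrees. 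So I would first prove a deterministic structural lemma: if $\psi_{T_n}(u) < \psi_{T_n}(v_1)$, then letting $c$ be the child of $v_1$ on the path toward $u$, the subtree $(T_n,v_1)_{c\downarrow}$ has size exceeding $n/2$. Consequently all vertices $u$ that beat the root lie in a \emph{single} one of the root's immediate subtrees, namely the largest one.

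This reduces the problem to the growth of the root's subtrees. The plan is to embed in the CMJ process as in the earlier proofs: at time $n=1$ the root $v_1$ begins its own birth process, and each child $c_1, c_2, \dots$ of $v_1$ (born at successive times) initiates an independent sublinear preferential attachment process. By Theorem~\ref{thm: ABC}, rescaling each such subtree by $e^{-\theta t}$ yields almost sure convergence to positive limits $W_1, W_2, \dots$, which are i.i.d.\ (the $W$-limit distributions depend only on the single-vertex starting state). The event that more than half the tree concentrates in one child-subtree is essentially the event that one of the $W_j$ dominates the sum of all the others. Thus the first structural lemma converts $\{v_1 \notin H^K_\psi\}$ into an event about how lopsided the collection $\{W_j\}$ can be, and I would bound the number of vertices that can beat the root by controlling how much mass sits in the dominant child-subtree relative to the rest.

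Concretely, the second main step is a tail estimate: I would show that the probability that the largest child-subtree of the root contains more than $K$ vertices each individually more central than $v_1$ decays in $K$. Here I expect to invoke a Lemma~\ref{lemma: poling}-type estimate (analogous to its use in the proof of Lemma~\ref{lemma: finite}) to bound the probability that one CMJ limit $W_j$ exceeds the sum of the remaining limits, and to sum such bounds. Because the child-subtrees are i.i.d.\ CMJ processes with a nondegenerate limit $W$ bounded away from $0$ almost surely, the chance that a given configuration of $K+1$ competing subtrees lets all but one be beaten is summable in $K$, yielding a bound $\mathbb P(v_1 \notin H^K_\psi(T_n)) \le g(K)$ with $g(K)\to 0$ as $K\to\infty$ and \emph{independent of $n$}. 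Taking $\liminf$ in $n$ then gives the claim for $K$ large enough that $g(K) \le \epsilon$.

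The main obstacle will be establishing the uniform-in-$n$ control in the third step. Unlike the finiteness argument of Lemma~\ref{lemma: finite}, where a Borel--Cantelli summation over $n$ suffices, here I need a bound on the \emph{count} of vertices beating the root that holds simultaneously for every $n$, which requires care because the candidate set of beating vertices grows with $n$ and the convergence in Theorem~\ref{thm: ABC} is only asymptotic. The delicate point is to translate almost sure CMJ convergence into a genuinely $n$-free probabilistic tail bound on the number of over-central vertices; I anticipate this demands either a uniform integrability / $L^2$ argument using the $L^2$ convergence in Theorem~\ref{thm: ABC}, or a more careful union bound exploiting that only subtrees of the single dominant child can contribute beating vertices. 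Controlling the interplay between the discrete-time filtration (where the root's children are born at random discrete times) and the continuous-time limits is where the technical heart of the argument lies.
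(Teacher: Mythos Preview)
Your structural lemma in step 1 is correct: any vertex $u$ with $\psi_{T_n}(u) < \psi_{T_n}(v_1)$ must lie in the unique child-subtree of $v_1$ of size exceeding $n/2$. But the proposal breaks down after this, in two places.

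First, a technical error: the limits $W_1, W_2, \dots$ are \emph{not} i.i.d. If child $c_j$ is born at continuous time $\tau_j$, then $e^{-\theta t}|(T_t,v_1)_{c_j\downarrow}| \to e^{-\theta \tau_j}\tilde W_j$ where the $\tilde W_j$ are i.i.d.; the factors $e^{-\theta \tau_j}$ force $W_j \to 0$, and indeed $\sum_j W_j < \infty$ almost surely. The first child's subtree is systematically far larger than later ones, which is exactly what makes a root-children decomposition awkward.

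Second, and this is the essential gap: localizing the beating vertices to one subtree does not bound their \emph{number}. If the dominant child-subtree has size $S$, a vertex $u$ in it beats $v_1$ exactly when $|(T_n,v_1)_{u\downarrow}| > n - S$; when $S$ is close to $n$ this threshold is tiny and the count of such $u$ can be of order $n$. Your step 3 proposes to bound the probability that one $W_j$ exceeds the sum of the others, but that only controls whether a dominant subtree exists, not how many vertices inside it beat the root. No Lemma~\ref{lemma: poling}-type estimate on the $W_j$'s alone can produce a count, because the count depends on the internal structure of the dominant subtree, not just its total mass. To salvage your route you would need, at minimum, a separate bound $\psi(v_1) \le (1-\delta)n$ with high probability (so $n - S \ge \delta n$), and then a recursive argument counting vertices whose subtree exceeds $\delta n$; neither is sketched, and the latter is not obviously easier than the original problem.

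The paper sidesteps this by decomposing along the first $K$ \emph{vertices} rather than the root's children. Removing all edges of $T_n$ among $\{v_1,\dots,v_K\}$ yields a forest $T^n_{1,K},\dots,T^n_{K,K}$, and for any $i > K$ one has $\psi(v_i) \ge \min_{k} \sum_{j\ne k}|T^n_{j,K}|$. Hence once $\psi(v_1) < (1-\delta_0)n$ and no single piece $T^n_{k,K}$ has mass exceeding $\delta_0 n$, \emph{every} $v_i$ with $i>K$ fails to beat $v_1$; the count is capped at $K-1$ by construction. The $K$ forest pieces all start as single vertices at the same time $K$, so their CMJ limits are genuinely comparable (up to a head-start of size at most the maximum degree in $T_K$, which is $O((\log K)^{1/(1-\alpha)})$ and is handled by a stochastic domination lemma), and a single Lemma~\ref{lemma: poling} application plus a union bound over $k$ finishes. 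This is the missing idea: choosing the decomposition so that the pieces are balanced and their number directly caps the count, rather than trying to count beating vertices inside one large piece.
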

\begin{figure}
\label{fig: tnik}
\begin{center}
\includegraphics[scale = 0.5]{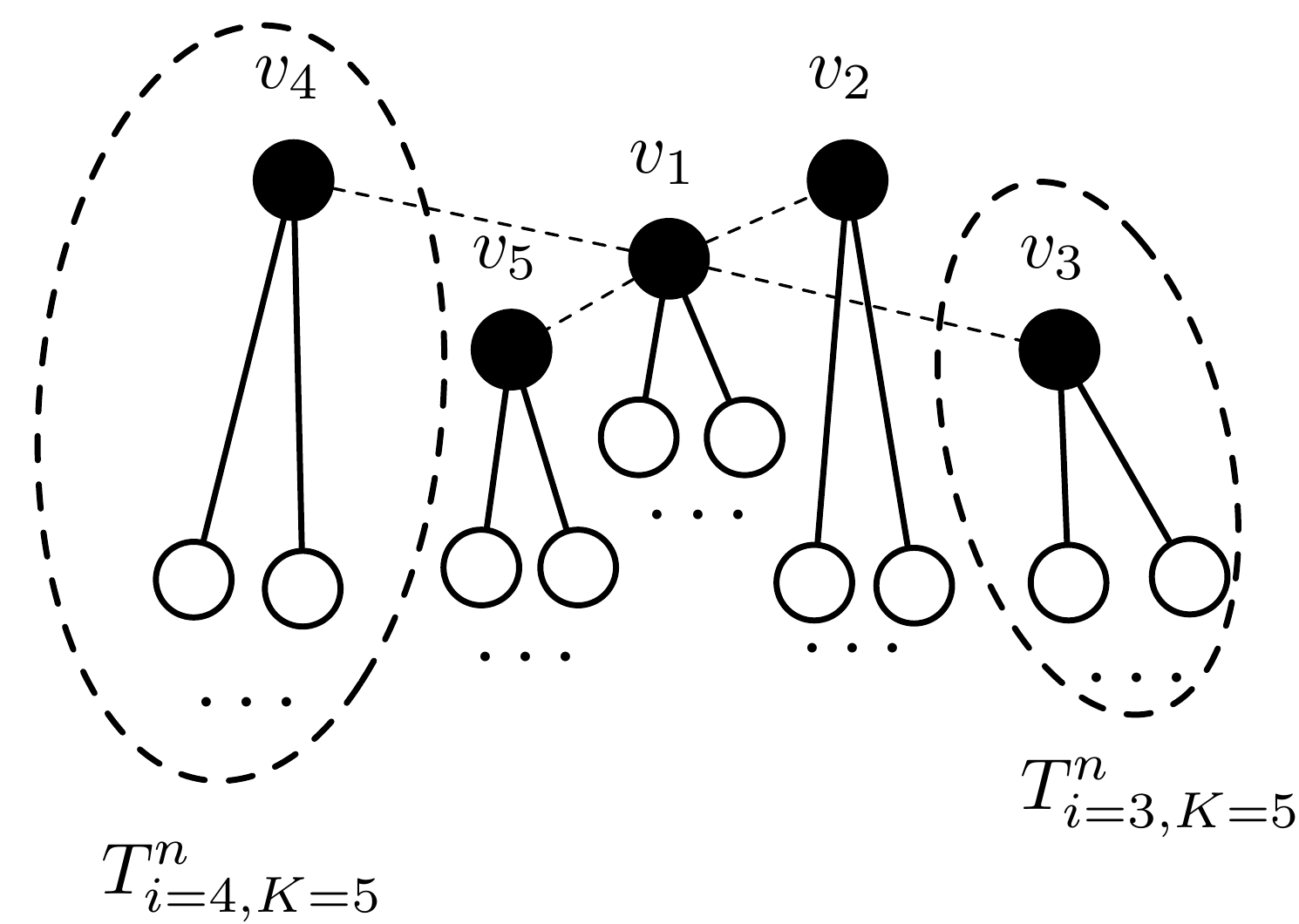}
\end{center}
\caption{An illustration of the trees $T^n_{i,K}$ defined in the proof of Theorem \ref{thm: adam}. The figure shows a tree with $K=5$, with the two trees $T^n_{3,5}$ and $T^n_{4,5}$ highlighted.}
\label{FigTnik}
\end{figure}

\begin{proof} [Proof of Theorem~\ref{thm: adam} (sketch)]
We follow the approach of Bubeck et al.~\cite{Bubeck14}. For $1 \le i \le K$, let $T^n_{i,K}$ denote the tree containing vertex $v_i$ in the forest obtained from $T_n$ by removing all edges between nodes $\{v_1, \dots, v_K\}$. (See Figure~\ref{FigTnik} for an illustration.) Observe that
\begin{align}
\label{EqnBlizzard}
\mathbb P(v_1 \notin H^K_\psi) & \leq \mathbb P\left(\exists i > K : \psi(v_i) \leq \psi(v_1)\right) \notag \\
& \leq \mathbb P(\psi(v_1) \geq (1-\delta)n) + \mathbb P(\exists i > K : \psi(v_i) \leq (1-\delta)n),
\end{align}
for $\delta > 0$ to be chosen later. To handle the first term in inequality~\eqref{EqnBlizzard}, we have the following lemma:
\begin{lemma*}[Proof in Appendix \ref{appendix: LemBhalu}]
\label{LemBhalu}
There exists $\delta_0 > 0$ such that
\begin{equation*}
\lim\sup_{n \to \infty} \mprob\left(\psi(v_1) \geq (1-\delta_0)n\right) < \frac{\epsilon}{2}.
\end{equation*}
\end{lemma*}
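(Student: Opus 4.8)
The lemma asserts that the balancedness value of the root, $\psi(v_1)$, is unlikely to be very close to $n$ (the total size). Intuitively, $\psi_{T_n}(v_1)$ is the size of the largest subtree hanging off the root $v_1$, and we want to show that with probability bounded away from zero (at least $1 - \epsilon/2$ in the limit), this largest subtree does not occupy almost the entire tree.

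Let me think about the structure carefully.

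The plan is to embed the discrete tree into its continuous-time CMJ representation and exploit the fact that, in the limit, each of the root's subtrees occupies a fixed positive fraction of the whole. Since $\psi(v_1) = \psi_{T_n}(v_1)$ is the size of the largest subtree hanging off $v_1$, the event $\{\psi(v_1) \ge (1-\delta_0)n\}$ forces every \emph{other} subtree to have size at most $\delta_0 n$; in particular the second-largest subtree has size at most $\delta_0 n$. The key reduction is to bound this quantity by comparing against two \emph{fixed} subtrees, rather than the harder-to-control order statistics. Let $c_1$ and $c_2$ be the first two children of $v_1$, born at continuous times $t_1 < t_2$ which are finite almost surely, since the root's offspring process is a pure-birth Markov chain with rate $f \ge 1$ and hence produces its second child in finite time a.s. Writing $A_n$ and $A_n'$ for the sizes of the subtrees rooted at $c_1$ and $c_2$ in $T_n$, a short combinatorial argument (there are at least two subtrees, of sizes $A_n$ and $A_n'$, so the second-largest over \emph{all} subtrees is at least $\min(A_n, A_n')$) gives the inclusion $\{\psi(v_1) \ge (1-\delta_0)n\} \subseteq \{\min(A_n,A_n') \le \delta_0 n\}$.

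Next I would establish that $\min(A_n,A_n')/n$ converges almost surely to a strictly positive limit. By the branching property, the subtree hanging off $c_i$ is itself an independent CMJ process driven by $\xi$, started at time $t_i$, so Theorem~\ref{thm: ABC} gives $e^{-\theta(t-t_i)}|A^{(i)}_t| \to W_i$ a.s., where $W_1, W_2$ are i.i.d.\ copies of $W$. Together with $e^{-\theta t} Z_t \to W_{\mathrm{root}}$ (again Theorem~\ref{thm: ABC}), this yields, along the birth times $\tau_n$ at which $|T_n| = Z_{\tau_n} = n$,
$$\frac{A_n}{n} \to \frac{e^{-\theta t_1} W_1}{W_{\mathrm{root}}} =: \rho, \qquad \frac{A_n'}{n} \to \frac{e^{-\theta t_2} W_2}{W_{\mathrm{root}}} =: \rho'.$$
Both $\rho$ and $\rho'$ are strictly positive almost surely, because $W_1, W_2 > 0$ a.s. and $0 < W_{\mathrm{root}} < \infty$ a.s.\ by Theorem~\ref{thm: ABC}, and $t_1, t_2 < \infty$ a.s.

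Finally, since $\min(A_n, A_n')/n \to \min(\rho, \rho') > 0$ almost surely, reverse Fatou applied to the bounded indicators gives $\limsup_{n\to\infty} \mathbb{P}\big(\min(A_n,A_n') \le \delta_0 n\big) \le \mathbb{P}\big(\min(\rho,\rho') \le \delta_0\big)$. As $\delta_0 \downarrow 0$, the right-hand side decreases to $\mathbb{P}(\min(\rho,\rho') = 0) = 0$ by continuity of measure, so choosing $\delta_0$ small enough makes it smaller than $\epsilon/2$; combined with the inclusion from the first paragraph, this proves the lemma. I expect the main obstacle to be the rigorous transfer from the discrete tree to the continuous-time limit: namely, justifying that the subtrees of $c_1$ and $c_2$ are genuinely independent CMJ processes to which Theorem~\ref{thm: ABC} applies, and that their normalized sizes converge along the random birth-time subsequence $\tau_n$. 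Controlling the true order statistics $S_1 \ge S_2 \ge \cdots$ directly would be considerably more delicate, since a later-born subtree could in principle overtake an earlier one; replacing the second-largest subtree by $\min(A_n, A_n')$ is precisely what lets me avoid any uniform control over the infinitely many subtrees.
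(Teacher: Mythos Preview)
Your argument is correct, but it is organized differently from the paper's proof. The paper removes the single edge $(v_1,v_2)$ and works with the two resulting pieces $T^n_{1,2}$ (containing $v_1$) and $T^n_{2,2}$ (containing $v_2$); since every subtree hanging off $v_1$ is either $T^n_{2,2}$ or a subtree of $T^n_{1,2}$, one has $\psi(v_1)\le \max(|T^n_{1,2}|,|T^n_{2,2}|)$, and then the CMJ convergence of Theorem~\ref{thm: ABC} shows $|T^n_{1,2}|/n$ and $|T^n_{2,2}|/n$ each converge a.s.\ to a continuous random variable strictly inside $(0,1)$, from which $\delta_0$ is chosen. You instead look at the first \emph{two} children $c_1,c_2$ of $v_1$ and bound the second-largest subtree from below by $\min(A_n,A_n')$, which is the dual inequality.

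Both routes rest on the same CMJ limit theorem and a reverse-Fatou/portmanteau step, so neither is deeper than the other. The paper's version is marginally more economical: it needs only $v_2$, which exists deterministically for $n\ge 2$, whereas your argument also requires that $v_1$ almost surely acquires a second child (true, since $f\ge 1$, but an extra ingredient). On the other hand, your reduction to $\min(A_n,A_n')\le \delta_0 n$ is conceptually clean and would generalize naturally if one wanted to show, say, that the $k$ largest subtrees of $v_1$ each occupy a nontrivial fraction. The point you flagged as the main obstacle---independence of the subtrees as CMJ processes and convergence along the random birth-time subsequence $\tau_n$---is handled exactly as you describe and is also implicit in the paper's argument.
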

The proof of the above lemma is simple and follows by an argument similar to that in Bubeck et al.~\cite{Bubeck14}. The analysis of the second term in inequality~\eqref{EqnBlizzard} is more technical:

\begin{lemma*}[Proof in Appendix \ref{appendix: LemHouston}]
\label{LemHouston}
There exist constants $N$ and $C$ depending only on $\epsilon$ such that if $K>N$ and
$$\frac{CK(\log K)^{\frac{2}{1-\alpha}}}{(K-1)^2} < \frac{\epsilon}{4},$$
then
\begin{equation*}
\lim\sup_{n \to \infty} \mathbb P\left(\exists i > K : \psi(v_i) \leq (1-\delta_0)n\right) < \frac{\epsilon}{2}.
\end{equation*}
\end{lemma*}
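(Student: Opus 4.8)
The plan is to reduce the event in question to a statement about the sizes of the bushes $T^n_{j,K}$ hanging off the first $K$ vertices, and then to control those sizes through the continuous time embedding together with the convergence result of Theorem~\ref{thm: ABC} and a union bound over the $K$ bushes.

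First I would establish a deterministic reduction. Fix $i > K$ and let $j \le K$ be the index with $v_i \in T^n_{j,K}$. Rooting $T_n$ at $v_i$, one of its subtrees is the one directed back toward the root, whose size is $n - D_i$, where $D_i$ is the size of the descendant subtree of $v_i$ (including $v_i$); hence $\psi(v_i) \ge n - D_i$. Thus $\psi(v_i) \le (1-\delta_0)n$ forces $D_i \ge \delta_0 n$. Since every descendant of $v_i$ is born after time $K$ and therefore lies in the same bush as $v_i$, while the base $v_j$ is an ancestor (not a descendant) of $v_i$, we obtain $|T^n_{j,K}| > D_i \ge \delta_0 n$. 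Consequently,
\begin{equation*}
\mathbb P\big(\exists i > K : \psi(v_i) \le (1-\delta_0)n\big) \le \sum_{j=1}^K \mathbb P\big(|T^n_{j,K}| > \delta_0 n\big),
\end{equation*}
and it suffices to bound each summand.

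Next I would analyze a single bush via the CMJ embedding. At discrete time $K$, pause and restart the process in continuous time from the configuration $T_K$. Given $T_K$, the $K$ bushes evolve as \emph{independent} CMJ processes: bush $j$ is driven by $\xi$ seeded by $v_j$, whose birth rate is shifted to begin at the out-degree $d_j$ of $v_j$ in $T_K$, and similarly for the competing bushes. By Theorem~\ref{thm: ABC}, $e^{-\theta t}$ times each bush size converges almost surely to an independent limit, so $|T^n_{j,K}|/n \to W_j/\sum_{l=1}^K W_l$ almost surely, whence
\begin{equation*}
\limsup_{n\to\infty}\mathbb P\big(|T^n_{j,K}| > \delta_0 n \,\big|\, T_K\big) \le \mathbb P\Big(W_j > c_0 \textstyle\sum_{l \ne j} W_l \,\Big|\, T_K\Big), \qquad c_0 = \tfrac{\delta_0}{1-\delta_0}.
\end{equation*}

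The third step is to remove the dependence of the $W_l$ on the head starts, and this is where I expect the main obstacle to lie. A vertex seeded with out-degree $d$ gives birth faster than a fresh vertex, and I would construct a coupling under which its bush is stochastically dominated by the superposition of $d+1$ fresh bushes, while conversely each competing $W_l$ dominates a fresh-bush limit $\widetilde W_l$. Writing $M_K = \text{Max-Deg}(T_K)$, this reduces the right-hand side above to $\mathbb P\big(\sum_{i=1}^{M_K+1}\widetilde W_i > c_0 \sum_{l=1}^{K-1}\widetilde W'_l\big)$ for i.i.d.\ fresh-bush limits, and a suitable generalization of Lemma~\ref{lemma: poling} — sharpening the second-moment deviation rate $1/(K-1)$ to a quadratic rate $1/(K-1)^2$ by exploiting higher moments of $W$ — should yield a bound of order $(M_K+1)^2/(K-1)^2$. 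The delicate points are justifying the $(d+1)$-fold domination in the sublinear regime, where the seeded rate $(1+d)^\alpha$ is only a fractional power of $d+1$, and obtaining the quadratic decay uniformly in the sizes of the two competing collections. Finally, I would insert the known order of the maximum degree: for $\alpha$-sublinear trees $M_K = O\big((\log K)^{1/(1-\alpha)}\big)$ with high probability, so on the event $\{M_K \le C'(\log K)^{1/(1-\alpha)}\}$ the union over the $K$ bushes is at most $CK(\log K)^{2/(1-\alpha)}/(K-1)^2 < \epsilon/4$ by hypothesis, while for $K > N$ the complementary (atypical max-degree) event contributes probability below $\epsilon/4$. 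Combining these two contributions after averaging over $T_K$ gives the claimed bound of $\epsilon/2$.
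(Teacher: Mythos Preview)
Your outline is correct and matches the paper's proof step for step: the same deterministic reduction (the paper phrases $|T^n_{j,K}|>\delta_0 n$ via the complement $\sum_{l\ne j}|T^n_{l,K}|\le(1-\delta_0)n$), the CMJ restart at time $K$, the $(d+1)$-fold stochastic domination (this is exactly Lemma~\ref{lemma: separation}), the max-degree input for $T_K$, and Lemma~\ref{lemma: poling}. The two points you flag as delicate are in fact routine: the $(d+1)$-fold domination holds \emph{because} $\alpha<1$, since $(d+k+1)^\alpha\le\sum_{i=1}^{d+1}(r_i+1)^\alpha$ whenever $\sum r_i=k$, and Lemma~\ref{lemma: poling} already delivers the quadratic rate $C\,\mathbb E[Y^2]/(K-1)^2$ from second moments alone (Hoeffding on a truncation of the $X_i$ plus Markov on $Y^2$), so no sharpening or higher moments are required.
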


A brief proof sketch of Lemma~\ref{LemHouston} is as follows. First, we claim that for any $i > K$, 
$$\psi(v_i) \geq \min_{1 \leq k \leq K} \sum_{j = 1, j \neq k}^K |T^n_{j,K}|.$$
This is because $v_i$ must lie in one of the trees $T^n_{k, K}$, for some $1 \leq k \leq K$. Thus, the largest subtree hanging off $v_i$ is at least as large as the subtree of $(T, v_i)$ containing vertex $v_k$. From Figure \ref{fig: tnik}, we see that this is at least $\sum_{j=1, j \neq k}^K |T^n_{j, K}|$, which is in turn at least $\min_{1 \leq k \leq K} \sum_{j = 1, j \neq k}^K |T^n_{j,K}|$.
Hence, the desired expression may be lower-bounded by
\begin{multline*}
\mathbb P\Big(\exists 1 \leq k \leq K : \sum_{j=1, j\neq k}^K |T^n_{j,K}| \leq (1-\delta_0)n\Big)
\stackrel{(a)}= \mathbb P\Big(\exists 1 \leq k \leq K : \sum_{j=1, j\neq k}^K |T^n_{j,K}| \leq \frac{1-\delta_0}{\delta_0} |T_{k, K}| \Big),
\end{multline*}
where $(a)$ follows because $\sum_{j=1}^K |T^n_{j, K}|$ is simply the total number of vertices, which is $n$.
This final term may be bounded from above by observing that (i) the growth rate of $|T^n_{k,K}|$ is larger for a large degree of $v_k$ in $T_K$; and (ii) the maximum degree of $T_K$ is roughly $(\log K)^{1/(1-\alpha)}$, which is not sufficient to increase the growth of $|T^n_{k,K}|$ so as to compete with the sum of $K-1$ trees given by $\sum_{j=1, j\neq k}^K |T^n_{j,K}|$, when $K$ is sufficiently large. \\

Combining the bounds of Lemmas~\ref{LemBhalu} and~\ref{LemHouston} and substituting back into inequality~\eqref{EqnBlizzard} completes the proof.
\end{proof}



\section{Discussion} \label{section: discussion}

In this paper, we have established the existence of a unique terminal centroid in sublinear preferential attachment trees. However, our results have stopped short of proving that a persistent centroid exists, which was conjectured in our previous work~\cite{JogLoh15}. To establish the stronger statement, it would suffice to show that the terminal centroid identified in our paper is in fact persistent. (Although somewhat counterintuitive, the definition of terminal centrality leaves open the possibility that the terminal centroid never actually becomes the tree centroid at any finite time point.)

A possible approach leverages ideas from our previous work~\cite{JogLoh15}. We now describe the main bottleneck in extending the argument employed there in the present setting. For the purpose of this discussion, suppose the sublinear preferential attachment tree has an attraction function $f(i) = (i+1)^\alpha$, for some $0 < \alpha < 1$. The analog of our previous approach~\cite{JogLoh15} would involve two steps: (i) showing that the total number of vertices that ever become centroids is finite, almost surely; and (ii) concluding the existence of unique persistent centroid by an application of Lemma \ref{lemma: one of them wins}. In the first step, we need to show that vertices which are born late have a very small probability of ever becoming the tree centroid at any future point in time, and then apply the Borel-Cantelli lemma to establish finiteness. As described in the proof of Theorem \ref{ThmPersist}, a vertex $v_{n+1}$ can become centroid at some point in the future if and only if the tree $A_t$ is able to catch up with the tree $B_t$ at some future time $t$. However, unlike in the case of uniform or linear preferential attachment trees, the probability that a new vertex joins $A_t$ or $B_t$ is no longer proportional to a simple linear function of the size of the subtree. Based on these ideas, however, one can show that a sufficient condition for persistent centrality in the tree growth process is as follows:

\paragraph{\textbf{Irrelevance of structure condition:}}
There exists a parameter $\eta \in (0,1)$ such that for any two trees $\Gamma_1$ and $\Gamma_2$ with $|\Gamma_1| = |\Gamma_2|$, the probability that the CMJ process started from $\Gamma_1$ has a larger population than the CMJ process started from $\Gamma_2$, in the limit, lies in the interval $(\eta, 1-\eta)$. \\

The above condition essentially ensures that the structure of the tree does not have a significant impact on how quickly it grows.

Note that for linear preferential attachment and uniform attachment trees, we can take any $\eta < 1/2$, since the probability that the population of $\Gamma_1$ is larger than the population of $\Gamma_2$ in the limit is exactly $\frac{1}{2}$. Irrelevance of structure is thus crucially leveraged in the proof strategy of our previous work~\cite{JogLoh15}. Unfortunately, the irrelevance of structure condition does not appear to hold for sublinear preferential attachment trees, due to the following small example:
\begin{example}
Let $\Gamma_1$ be the ``line tree"; i.e. a sequence of $r$ vertices $\{v_1, v_2, \dots, v_r\}$ such that the edge set is $\{(v_i, v_{i+1}): 1 \leq i \leq r-1\}$. Let $\Gamma_2$ be the ``star tree" with the edge set being $\{(v_1, v_i): 2 \leq i \leq r \}$. If the population of the CMJ process started from $\Gamma_i$ is asymptotically $W_i e^{\theta t}$ for $i \in \{1,2\}$, it is possible to show that $\frac{W_1}{r}$ and $\frac{W_2}{r}$ converge in probability to some constants $c_1$ and $c_2$, such that $c_1 > c_2$, as $r \rightarrow \infty$. Thus, the probability that the population of $\Gamma_1$ is larger than the population of $\Gamma_2$ in the limit tends to 1 as $r \rightarrow \infty$. This violates the irrelevance of structure condition, since no matter which $\eta > 0$ is chosen, the probability of $\Gamma_1$ being larger than $\Gamma_2$ in the limit surpasses $1-\eta$ for all large enough $r$.
\end{example}

Of course, line trees and star trees do not typically show up in sublinear preferential attachment trees, so it may be possible to redefine the irrelevance of structure property to rule out such low-probability configurations. However, a suitable modification that paves the way to proving persistent centrality has yet to be determined.

Finally, suppose we define $\cC_\infty$ to be the set of all vertices which are tree centroids for infinite amounts of time. It is easy to see that Lemma \ref{lemma: one of them wins} rules out the possibility of $\cC_\infty \geq 2$, since any two vertices in $\cC_\infty$ will have a fixed centrality ordering in the limit. Note that $\cC_\infty = 0$ implies that an infinite number of vertices ever become centroids, albeit for finite amounts of time; whereas $\cC_\infty = 1$ also does not preclude such a possibility. A weaker conjecture than persistent centrality, but stronger than terminal centrality, would therefore be to show that $|\cC_\infty| = 1$. This question currently remains open.

Regarding root inference, it is an open question whether the method of constructing finite confidence sets based on centrality continues to hold beyond the subclass of $\alpha$-sublinear preferential attachment trees. Also note that the results on root inference in this paper are generally weaker than those obtained for linear preferential and uniform attachment trees in Bubeck et al.~\cite{Bubeck14} and Jog and Loh~\cite{JogLoh15}, since we have not provided bounds on the size of a confidence set for the root node, or the size of the hub around $v_1$ that will ensure its persistent centrality. The main hurdle in establishing such bounds is, again, the lack of concrete information about the limiting random variable $W$ in a CMJ process. Although obtaining the exact distribution of $W$ seems too optimistic, it may be possible to obtain bounds on moments or tail probabilities, which could be used to obtain bounds on hub sizes or confidence sets. Our results in this paper also strengthen the belief that the age of a node and its centrality are strongly related in growing random trees, implying that it is extremely difficult for a vertex to hide its age. Fanti et al.~\cite{FanEtal15} explored the problem of how to create a diffusion process over a regular tree in order to obfuscate the oldest node, and it would be very interesting to see if classes of attraction functions exist that cause the tree to grow in such a way that the best confidence set for the root node does not remain finite as the tree grows.


\section*{Acknowledgment}
The authors would like to thank the AE and three anonymous reviewers for their helpful and positive feedback while preparing the revision.

\bibliography{refs}

\begin{appendix}
\section{Results on CMJ processes}\label{appendix: cmj}

In this Appendix, we review properties of CMJ processes and verify that the CMJ process corresponding to a sublinear preferential attachment tree enjoys certain convergence properties.

\subsection{Preliminary results}

We begin by stating several results that will be crucial for our purposes. For a more detailed discussion of such results, see the survey paper by Jager and Nerman~\cite{JagerNerman84}.

\begin{lemma*}[Corollary 4.2 and Theorem 4.3 from Jager and Nerman~\cite{JagerNerman84}]\label{lem: L2}
Let $\xi$ be a point process on $\mathbb R_+$ with Malthusian parameter $\theta > 0$. Consider a CMJ process driven by $\xi$ in which individuals live forever. Let the population of the CMJ process at time $t \geq 0$ be denoted by $Z_t$. Define
$$\hat \xi(\theta) = \int_0^\infty e^{-\theta t}d\xi(t).$$
If the condition
\begin{equation}
\label{EqnStar}
\qquad \text{Var}(\hat \xi(\theta)) < \infty \tag{$\star$}
\end{equation}
is satisfied, then we have the convergence result
\begin{equation*}
e^{-\theta t}Z_t \stackrel{L^2} \longrightarrow W,
\end{equation*}
where $W$ is a random variable satisfying $W > 0$, almost surely.
\end{lemma*}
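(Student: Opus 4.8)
The plan is to reduce this continuous-time statement to the convergence of a discrete, generation-indexed additive martingale, for which the finite-variance hypothesis $(\star)$ yields $L^2$-boundedness through a direct second-moment recursion. Label individuals by the Ulam--Harris tree, write $\sigma_x$ for the birth time of individual $x$, and let $\xi_x$ be its reproduction process (an i.i.d.\ copy of $\xi$), so that a child $xj$ is born at $\sigma_{xj} = \sigma_x + t^x_j$, where $\{t^x_j\}$ are the points of $\xi_x$. Consider the \emph{intrinsic martingale}
$$M_n = \sum_{|x| = n} e^{-\theta \sigma_x},$$
with respect to the filtration $\mathcal{F}_n$ generated by the first $n$ generations. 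First I would observe that the stated Malthusian condition is exactly $\E[\hat\xi(\theta)] = \int_0^\infty e^{-\theta t} d\mu(t) = 1$ (integrate the condition by parts), whence $\E[M_{n+1} \mid \mathcal{F}_n] = \sum_{|x|=n} e^{-\theta\sigma_x}\,\E[\hat\xi_x(\theta)] = M_n$. Thus $(M_n)$ is a nonnegative mean-one martingale and converges almost surely to some limit $W$.

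For $L^2$-control, write $S_x = \hat\xi_x(\theta)$ (i.i.d.\ copies of $\hat\xi(\theta)$, independent of $\mathcal{F}_n$) and $U_n = \sum_{|x|=n} e^{-2\theta\sigma_x}$. Splitting $M_{n+1}^2 = \sum_{x,x'} e^{-\theta\sigma_x} e^{-\theta\sigma_{x'}} S_x S_{x'}$ into diagonal and off-diagonal terms gives
$$\E[M_{n+1}^2 \mid \mathcal{F}_n] = M_n^2 + \big(\E[\hat\xi(\theta)^2] - 1\big)\, U_n,$$
while $\E[U_{n+1} \mid \mathcal{F}_n] = \rho\, U_n$ with $\rho = \E[\hat\xi(2\theta)] = \int_0^\infty e^{-2\theta t} d\mu(t)$. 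Since $e^{-2\theta t} < e^{-\theta t}$ for $t > 0$ and the reproduction measure places no mass at the origin, $\rho < \E[\hat\xi(\theta)] = 1$ automatically, so $\E[U_n] = \rho^n$ is summable; combined with $\E[\hat\xi(\theta)^2] = 1 + \text{Var}(\hat\xi(\theta)) < \infty$ from $(\star)$, this yields $\sup_n \E[M_n^2] = 1 + \text{Var}(\hat\xi(\theta))/(1-\rho) < \infty$. Hence $M_n \to W$ in $L^2$ as well, with $\E[W] = 1$ and $\text{Var}(W) < \infty$.

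Next I would transfer the martingale limit to the time-indexed population count, and this is the step where I would invoke the general-branching-process machinery of Nerman. Counting the population with the characteristic $\phi \equiv \mathbf{1}_{[0,\infty)}$ (appropriate here because individuals never die), the renewal/Tauberian argument underlying Nerman's theorem expresses $e^{-\theta t} Z_t$ as a characteristic-counted average against the stable age distribution and identifies its limit as a positive constant multiple of the same $W$, which we simply rename $W$; the second-moment bound established above is precisely what upgrades this convergence to $L^2$. I expect this transfer to be the main obstacle: the delicate point is controlling the contribution of individuals born in the ``boundary layer'' near time $t$ and verifying the regularity hypotheses (finiteness of $\int_0^\infty e^{-\theta a}\phi(a)\,da$ and the direct-Riemann-integrability conditions) under which the characteristic-counted process converges, rather than the essentially routine $L^2$ recursion.

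Finally, strict positivity $W > 0$ almost surely follows cleanly from the branching structure. Conditioning on the first generation, $W = \sum_j e^{-\theta\sigma_j} W^{(j)}$ with $W^{(j)}$ i.i.d.\ copies of $W$ independent of the birth times, so $q \defn \mprob(W = 0)$ satisfies $q = \mprob(W^{(j)} = 0 \ \forall j)$. Because the pure-birth process $\xi$ produces infinitely many offspring almost surely, this forces $q = \lim_{k\to\infty} q^k \in \{0,1\}$; and $q = 1$ is impossible since $\E[W] = 1 > 0$. Therefore $q = 0$, i.e.\ $W > 0$ almost surely, completing the argument.
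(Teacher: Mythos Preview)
The paper does not prove this lemma at all: it is quoted verbatim as Corollary~4.2 and Theorem~4.3 of Jagers--Nerman and used as a black box. So there is no ``paper's proof'' to compare against; you have supplied a proof where the authors simply cite one.

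Your sketch is the standard route to this result and is essentially correct. The intrinsic martingale $M_n=\sum_{|x|=n}e^{-\theta\sigma_x}$, the second-moment recursion $\E[M_{n+1}^2\mid\cF_n]=M_n^2+\operatorname{Var}(\hat\xi(\theta))\,U_n$ with $\E U_n=\rho^n$ and $\rho=\int e^{-2\theta t}\,d\mu(t)<1$, and the resulting $L^2$-boundedness are all exactly as in Nerman/Biggins. You are also right to flag the transfer from $M_n$ to $e^{-\theta t}Z_t$ as the substantive step requiring Nerman's renewal machinery; that is precisely the content of the Jagers--Nerman theorems being cited.

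One small caveat on the positivity argument: your conclusion $q\in\{0,1\}$ uses that each individual has infinitely many offspring almost surely. That is true for the pure-birth point processes considered in this paper (rate bounded below by $1$), but it is not literally part of the lemma's hypotheses as stated. In full generality one gets $q=\E[q^N]$, which forces $q$ to equal the extinction probability of the embedded Galton--Watson tree; combined with $\E W=1$ this still gives $W>0$ a.s.\ on non-extinction, and in the paper's setting non-extinction is automatic. So the argument is fine for the intended application, but the general statement needs the slight rephrasing.
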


\begin{lemma*}[Theorem 5.4 from Nerman~\cite{Nerman81}]\label{lem: ass}
Let $\xi, \theta,$ and $Z_t$ be as in Lemma~\ref{lem: L2}. If the mean intensity measure $\mu$ satisfies
\begin{equation}
\label{EqnStarStar}
\qquad \int_0^\infty e^{-\tilde\theta t}d\mu(t) < \infty, \quad \quad \text{for some} \quad \tilde\theta < \theta,
\tag{$\star\star$}
\end{equation}
then
\begin{equation*}
e^{-\theta t}Z_t \stackrel{a.s.} \longrightarrow W,
\end{equation*}
where $W$ is as in Lemma~\ref{lem: L2}.\footnote{In Nerman~\cite{Nerman81}, the condition~\eqref{EqnStarStar} appears in a more general form denoted Condition 5.1. As explained in the remark following Condition 5.1, the condition~\eqref{EqnStarStar} is stronger and implies Condition 5.1.}
\end{lemma*}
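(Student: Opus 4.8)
The plan is to prove the almost-sure statement by upgrading the $L^2$ convergence already furnished by Lemma~\ref{lem: L2}, with condition $(\star\star)$ supplying the extra integrability needed to pass from convergence in mean square to convergence with probability one. The central object is the \emph{intrinsic (reproduction) martingale}. Labeling individuals by the Ulam--Harris scheme and writing $\sigma_x$ for the birth time of individual $x$, I would set
\begin{equation*}
W_k := \sum_{x \,:\, |x| = k} e^{-\theta \sigma_x},
\end{equation*}
the $e^{-\theta t}$-weighted mass of the $k$-th generation. Because every individual reproduces according to an independent copy of $\xi$, conditioning on the first $k$ generations and using the Malthusian identity $\int_0^\infty e^{-\theta t}\,d\mu(t) = 1$ shows that each parent contributes, in expectation, exactly its own weight to the next generation; hence $(W_k)_{k \ge 0}$ is a nonnegative martingale with respect to the generation filtration and converges almost surely to a limit, which I would identify with $W$. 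The finite-variance hypothesis $(\star)$ already makes this martingale bounded in $L^2$, so the generational limit coincides with the $L^2$ limit of Lemma~\ref{lem: L2}, has mean $1$, and is strictly positive almost surely (survival is certain here, since $f \ge 1$ forces perpetual reproduction).

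The second and main step is to transfer this generation-indexed convergence to the real-time count $Z_t$, which tallies individuals born by time $t$. I would use the framework of \emph{counting with random characteristics}: writing $Z_t = Z_t^{\phi}$ for the characteristic $\phi(u) = \mathbf{1}\{u \ge 0\}$, the general convergence theorem underlying Nerman's Theorem~5.4 gives
\begin{equation*}
e^{-\theta t} Z_t^{\phi} \xrightarrow{\text{a.s.}} \frac{W}{\beta}\int_0^\infty e^{-\theta u}\, \mathbb{E}[\phi(u)]\, du, \qquad \beta := \int_0^\infty u\, e^{-\theta u}\, d\mu(u),
\end{equation*}
so that the limit is a strictly positive multiple of $W$. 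The mechanism here is a renewal/Tauberian argument carried out along \emph{stopping lines}: one decomposes the population at time $t$ according to the last ancestor whose birth weight exceeds a threshold, applies the martingale limit on the resulting (almost surely finite) stopping line, and controls the residual contribution of very recently born individuals.

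The hard part will be precisely this last transition from discrete levels (generations, or more delicately, stopping lines) to the continuum of real times: between consecutive stopping lines the count $Z_t$ can fluctuate, and one must show these fluctuations, once rescaled by $e^{-\theta t}$, vanish almost surely rather than merely in mean. This is exactly where $(\star\star)$ enters. The requirement that $\int_0^\infty e^{-\tilde\theta t}\, d\mu(t) < \infty$ for some $\tilde\theta < \theta$ forces the mean reproduction measure to have exponential moments strictly beyond the Malthusian rate, which bounds the expected mass of births in any recent time window and supplies the dominating function needed to interchange the limit with the (random) sum defining $Z_t^{\phi}$. Concretely, it is the uniform-integrability/dominated-convergence hook in Nerman's stopping-line decomposition, and without it the overshoot between stopping lines cannot be shown to be asymptotically negligible almost surely. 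In practice, the cleanest way to record the argument in the paper would be to verify that $(\star\star)$ implies Nerman's Condition~5.1 (as noted in the footnote) and then invoke Theorem~5.4 of~\cite{Nerman81} directly, with the sketch above indicating what that theorem does under the hood.
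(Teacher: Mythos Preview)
The paper does not prove this lemma at all: it is stated as a direct citation of Theorem~5.4 in Nerman~\cite{Nerman81}, with the footnote remarking that condition~\eqref{EqnStarStar} implies Nerman's more general Condition~5.1. No argument is given or needed in the paper.

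Your proposal therefore goes well beyond what the paper does. As a sketch of what Nerman's theorem actually accomplishes, your outline is reasonable---the intrinsic martingale $W_k = \sum_{|x|=k} e^{-\theta \sigma_x}$, the random-characteristic representation of $Z_t$, and the stopping-line decomposition are indeed the main ingredients of Nerman's proof, and you correctly identify $(\star\star)$ as the moment condition that controls the overshoot and permits the almost-sure (rather than merely $L^1$ or $L^2$) passage from the generational martingale to the real-time count. You also correctly anticipate, in your final sentence, exactly what the paper does: it simply verifies $(\star\star)$ (in Lemma~\ref{lemma: stars}) and invokes Nerman's theorem as a black box. So there is no discrepancy to flag; your sketch is supplementary exposition rather than an alternative proof.
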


Although not much is known about the exact distribution of $W$ in the case of a general CMJ process, the following useful properties have been established:
\begin{lemma*}[Theorem 1 from Biggins and Grey~\cite{Biggins79}]\label{lem: sing}
Let $W$ be the limit random variable appearing in Lemmas~\ref{lem: L2} and~\ref{lem: ass}. The the following properties hold:
\begin{itemize}
\item[(i)] The distribution of $W$ has no atoms.
\item[(ii)] The distribution of $W$ is either singular continuous or absolutely continuous.
\item[(iii)] The support of $W$ is all of $\mathbb R_+$; i.e., the set of positive points of increase of the distribution function of $W$ is all of $\mathbb R_+$.
\end{itemize}
\end{lemma*}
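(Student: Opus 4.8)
The backbone of all three claims is the distributional fixed-point (smoothing) equation satisfied by $W$, so the plan is to derive that first. Decomposing the immortal CMJ process by the subtrees rooted at the ancestor's children, if the offspring are born at the points $\sigma_1 < \sigma_2 < \cdots$ of $\xi$, then the subtree founded by the $i$-th child is, after a shift by $\sigma_i$, an independent copy $Z^{(i)}$ of the whole process; hence $Z_t = 1 + \sum_{i : \sigma_i \le t} Z^{(i)}_{t-\sigma_i}$. Multiplying by $e^{-\theta t}$, sending $t \to \infty$, and using the almost sure convergence of Lemma~\ref{lem: ass} on each summand gives
$$W \stackrel{d}{=} \sum_i e^{-\theta \sigma_i} W_i,$$
where the $W_i$ are i.i.d.\ copies of $W$ independent of $\xi$, and $\sum_i e^{-\theta\sigma_i} = \hat\xi(\theta)$. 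I would also record nondegeneracy at this stage: if $W \equiv c$ were a.s.\ constant, the equation forces $c = c\,\hat\xi(\theta)$, i.e.\ $\hat\xi(\theta) = 1$ almost surely, contradicting the genuine randomness of the pure-birth process $\xi$. Everything below is an algebraic consequence of this identity together with nondegeneracy.

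For (i), I would argue via the maximal atom. Set $\gamma = \sup_x \mathbb{P}(W = x)$ and suppose $\gamma > 0$. Conditioning on $\xi$ and isolating the first two children, $W = c_1 W_1 + c_2 W_2 + R$ with $c_j = e^{-\theta\sigma_j}$ and $R$ conditionally independent of $(W_1, W_2)$. Then $\mathbb{P}(c_1 W_1 + c_2 W_2 = s \mid \xi) = \sum_{c_1 u + c_2 v = s} \mathbb{P}(W=u)\,\mathbb{P}(W=v)$, and whenever $c_1/c_2$ avoids the countable set of atom-difference quotients $\{(v'-v)/(u-u')\}$, each value $s$ admits at most one representation, so this conditional atom is at most $\gamma^2$. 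Since $c_1/c_2 = e^{\theta(\sigma_2 - \sigma_1)}$ has a continuous (atomless) law, it avoids that countable set almost surely; integrating over $\xi$ yields $\mathbb{P}(W = a) \le \gamma^2$ for every $a$, hence $\gamma \le \gamma^2$, which together with $\gamma < 1$ forces $\gamma = 0$.

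For (ii) and (iii) I would reuse the same decomposition. Let $p \in [0,1]$ denote the absolutely continuous mass of the law of $W$; scaling preserves it, and the a.c.\ mass of a convolution $\nu_1 * \nu_2$ is at least $p_1 + p_2 - p_1 p_2$. Applying this to $c_1 W_1 + c_2 W_2$, observing that convolving with the remaining nonnegative terms cannot decrease a.c.\ mass, and averaging over $\xi$, gives $p \ge 2p - p^2$, so $p \in \{0,1\}$; with (i) this makes $W$ either absolutely continuous or singular continuous. For the support, the flexibility of the weights does the work: forcing the first $k$ births before a small fixed time makes $W$ exceed any bound with positive probability, so $\sup\supp(W) = \infty$; and for a target $y>0$ one conditions $\sigma_1$ into a tiny window about $-\theta^{-1}\log(y/w^*)$ for a large support point $w^*$ (so $c_1 \approx y/w^*$) and on a long gap $\sigma_2-\sigma_1$ (so the tail $\sum_{i\ge 2} e^{-\theta\sigma_i}W_i$ is small), whence $W \approx y$ on the further event $W_1 \approx w^*$. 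Letting $y \to 0$ gives $\inf \supp(W) = 0$, so $\supp(W) = \mathbb{R}_+$ with no gaps.

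The genuinely load-bearing step, and the one I expect to be the main obstacle, is the rigorous passage to the fixed-point equation: justifying the interchange of the limit $t\to\infty$ with the infinite sum of rescaled subtree limits. I would handle it by exploiting nonnegativity (Fatou for the inequality) and then upgrading to an identity by matching first and second moments via the $L^2$ control of Lemma~\ref{lem: L2}. A secondary but essential point is the atomlessness of the inter-birth gaps of $\xi$, which powers the generic-ratio argument in (i) and the window argument in (iii); this is exactly where the structure of the sublinear pure-birth process (with $f(0)=1$ and finite rates, giving exponential gaps) must be invoked, and where the fully general Biggins--Grey statement would demand extra care when the driving point process is permitted to be lattice-valued.
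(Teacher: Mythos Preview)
The paper does not prove this lemma: it is quoted as Theorem~1 of Biggins and Grey~\cite{Biggins79} with no accompanying argument. Your proposal goes beyond the paper and actually supplies a proof, and the route you take---derive the smoothing identity $W \stackrel{d}{=} \sum_i e^{-\theta\sigma_i} W_i$ from the branching decomposition, then read off (i) via the maximal-atom inequality $\gamma \le \gamma^2$, (ii) via the zero--one behavior of the absolutely continuous mass under independent sums, and (iii) via the continuous, fully supported exponential inter-birth times of $\xi$---is precisely the Biggins--Grey method. Your sketch is sound; the one genuinely delicate step is the one you already flag (justifying the limit interchange to obtain the fixed-point equation), and your proposed fix via Fatou plus the $L^2$ control of Lemma~\ref{lem: L2} is adequate here.
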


\begin{remark*}
Note that in all the above results, we have assumed that the branching process begins with a single individual. Suppose, however, that the process starts from some initial state consisting of a finite collection of nodes $\{v_1, \dots, v_k\}$ satisfying parent-child relationships according to a directed tree $T$ rooted at $v_1$. In this case, we can condition the CMJ process beginning with a single node on the event of observing the tree $T$ at some point, and conclude that the Malthusian normalized population converges to a random variable $\tilde W$ almost surely and in $L^2$. Although we do not provide a proof here, the limit random variable $\tilde W$ also satisfies all the properties in Lemma~\ref{lem: sing}.
\end{remark*}

\subsection{Sublinear preferential attachment}

We now specialize our discussion to sublinear preferential attachment processes.

\begin{lemma*}\label{lemma: malthus}
The Malthusian parameter $\theta$ for a sublinear preferential process always exists and satisfies $1 < \theta < 2$.
\end{lemma*}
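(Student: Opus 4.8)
The plan is to establish the existence and location of the Malthusian parameter $\theta$ for the point process $\xi$ associated to a sublinear attraction function $f$ by analyzing the equation $\theta \int_0^\infty e^{-\theta t} \mu(t)\, dt = 1$, where $\mu(t) = \mathbb{E}[\xi(0,t]]$. The key observation is that this defining equation can be recast as a monotonicity statement in $\theta$: define $\Phi(\theta) \defn \theta \int_0^\infty e^{-\theta t}\mu(t)\, dt$, and I would show that $\Phi$ is continuous and strictly decreasing on $(0,\infty)$, with limiting behavior that brackets the value $1$. Existence and uniqueness of $\theta$ then follow from the intermediate value theorem. Locating $\theta$ in the interval $(1,2)$ is then a matter of showing $\Phi(1) > 1$ and $\Phi(2) < 1$, i.e. that the sublinear process grows strictly faster than uniform attachment (Malthusian parameter $1$) and strictly slower than linear preferential attachment (Malthusian parameter $2$), as established in Examples~\ref{ExaUA} and~\ref{ExaPA}.

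The natural tool for the comparison step is a stochastic domination / monotonicity argument on the rate function. First I would rewrite $\Phi$ in a more tractable form. Since $\xi$ is a pure-birth Markov process with rate $f(i)$ in state $i$, its mean intensity $\mu(t)$ can be analyzed via the sojourn times: the time spent in state $i$ is exponential with rate $f(i)$, so $\mathbb{E}[\xi(0,t]]$ is governed by the partial sums of $1/f(i)$. A cleaner route is to use the identity $\theta\int_0^\infty e^{-\theta t}\mu(t)\,dt = \int_0^\infty e^{-\theta t}\, d\mu(t) = \mathbb{E}\!\left[\hat\xi(\theta)\right]$, where $\hat\xi(\theta) = \int_0^\infty e^{-\theta t}\, d\xi(t)$, and then exploit the Markov structure: conditioning on the jump times $0 = S_0 < S_1 < S_2 < \cdots$ where $S_k - S_{k-1}$ is exponential with rate $f(k-1)$ and independent, one obtains the explicit product formula
\begin{equation*}
\mathbb{E}\!\left[\hat\xi(\theta)\right] = \sum_{k=1}^\infty \mathbb{E}\!\left[e^{-\theta S_k}\right] = \sum_{k=1}^\infty \prod_{j=0}^{k-1} \frac{f(j)}{f(j) + \theta}.
\end{equation*}
This reduces the Malthusian equation to the clean statement $\sum_{k=1}^\infty \prod_{j=0}^{k-1} \frac{f(j)}{f(j)+\theta} = 1$. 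In this form, strict monotonicity in $\theta$ is transparent, since each factor $f(j)/(f(j)+\theta)$ is strictly decreasing in $\theta$.

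For the bracketing, I would compare termwise against the two reference processes using condition~3 of the definition, namely $1 \le f(j) \le (j+1)^\alpha$. The upper bound $f(j) \le (j+1)^\alpha < j+1$ (for $j \ge 1$, with strictness from $f$ not being identically $1$) gives that each factor is dominated by the corresponding factor $\frac{j+1}{j+1+\theta}$ for the Yule process, forcing the sum at $\theta = 2$ to be strictly less than $1$; and the lower bound $f(j) \ge 1$ gives domination from below by $\frac{1}{1+\theta}$, the uniform-attachment factor, forcing the sum at $\theta = 1$ to be strictly greater than $1$. Here I must be careful about where the monotone comparison of the reference sums against $1$ comes from: at $\theta = 2$ the Yule sum equals exactly $1$ (Example~\ref{ExaPA}), and at $\theta = 1$ the Poisson sum equals exactly $1$ (Example~\ref{ExaUA}), so the strict inequalities propagate directly. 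I expect the main obstacle to be the analytic bookkeeping needed to justify that the strict termwise inequalities survive summation — in particular ensuring the interchange of expectation and infinite sum (dominated/monotone convergence on the positive series) and confirming that $f$ being not identically $1$ yields strictness at at least one index $j$ with positive contribution, which is what separates $\theta$ strictly away from the endpoints rather than merely placing it in the closed interval $[1,2]$. Continuity of $\Phi$ in $\theta$, needed for the intermediate value argument, follows from uniform domination of the series on compact $\theta$-intervals bounded away from $0$.
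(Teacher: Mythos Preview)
Your proposal is correct and follows essentially the same strategy as the paper: show $\Phi(\theta)=\theta\int_0^\infty e^{-\theta t}\mu(t)\,dt$ is monotone decreasing, then sandwich the sublinear process between uniform attachment and linear preferential attachment to obtain $\Phi(1)>1$ and $\Phi(2)<1$. The paper carries out the comparison at the level of the intensity measures $\mu_{UA}(t)<\mu(t)<\mu_{PA}(t)$ (equivalently writing $\Phi(\theta)=\mathbb{E}[\xi(X_\theta)]$ for $X_\theta\sim\mathrm{Exp}(\theta)$), whereas you unpack the same quantity into the explicit product form $\sum_{k\ge 1}\prod_{j=0}^{k-1}\frac{f(j)}{f(j)+\theta}$ and compare factorwise; these are equivalent bookkeeping devices for the same argument. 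One small slip: in your upper-bound step the strictness of $f(j)<j+1$ for $j\ge 1$ comes from $\alpha<1$, not from ``$f$ not identically $1$''; that latter hypothesis is what you need for strictness in the \emph{lower} bound at $\theta=1$.
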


\begin{proof}
A stronger version of this lemma may be found in Lemma 44 of Bhamidi~\cite{Bhamidi07}. Let $\xi$ be the point process associated with a sublinear preferential attachment function $f$ with mean intensity $\mu(t)$. Let $\mu_{UA}(t)$ and $\mu_{PA}(t)$ be the mean intensities of the standard Yule process and the Poisson process with rate 1, respectively. Clearly, the mean intensity functions satisfy
\begin{equation*}
\mu_{UA}(t) < \mu(t) < \mu_{PA}(t).
\end{equation*}
Let $X_\theta$ be an exponential random variable with rate $\theta$, independent of $\xi$. Note that the integral
\begin{align*}
\theta \int_0^\infty e^{-\theta t}\mu(t) dt = \mathbb E[\xi(X_\theta)]
\end{align*}
is monotonically decreasing in $\theta$. At $\theta = 1$, using the fact that $\mu_{UA}(t) < \mu(t)$, we have $1 < \mathbb E[\xi(X_1)]$. Similarly, at $\theta =2$, we may use the fact that $\mu(t) < \mu_{PA}(t)$ to obtain $\mathbb E [\xi(X_2)] <1$. By monotonicity, the value of $\E[\xi(X_\theta)]$ must therefore equal 1 at some $1 < \theta < 2$.
\end{proof}

\begin{lemma*}\label{lemma: stars}
The point process $\xi$ corresponding to a sublinear attraction function $f$ satisfies conditions~\eqref{EqnStar} and~\eqref{EqnStarStar}.
\end{lemma*}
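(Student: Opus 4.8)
The plan is to work directly with the birth times of the pure-birth process $\xi$ and to reduce both conditions to a single stretched-exponential decay estimate. Write $0 = \tau_0 < \tau_1 < \tau_2 < \cdots$ for the successive birth times, so that the holding time in state $i$ is $E_i \sim \operatorname{Exp}(f(i))$, with the $E_i$ mutually independent and $\tau_k = \sum_{i=0}^{k-1} E_i$. Then $\hat\xi(s) = \int_0^\infty e^{-st}\,d\xi(t) = \sum_{k=1}^\infty e^{-s\tau_k}$, and since $\mathbb{E}[e^{-sE_i}] = f(i)/(f(i)+s)$, the Laplace transform of a sum of independent exponentials gives
$$\mathbb{E}[e^{-s\tau_k}] = \prod_{i=0}^{k-1}\frac{f(i)}{f(i)+s}, \qquad \mathbb{E}[\hat\xi(s)] = \sum_{k=1}^\infty\prod_{i=0}^{k-1}\frac{f(i)}{f(i)+s}.$$
Everything then hinges on showing these products decay fast. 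Since $x \mapsto x/(x+s)$ is increasing and $f(i) \le (i+1)^\alpha$, each factor is at most $(i+1)^\alpha/((i+1)^\alpha+s)$. Taking logarithms, using the chord bound $\log(1+x) \ge x\,\log(1+s)/s$ on $[0,s]$ (valid as $s(i+1)^{-\alpha} \le s$), and then $\sum_{j=1}^{l} j^{-\alpha} \gtrsim l^{1-\alpha}$, I would obtain a constant $c_s > 0$ with
$$\prod_{i=0}^{l-1}\frac{f(i)}{f(i)+s} \le \exp\!\big(-c_s\, l^{1-\alpha}\big) \qquad \text{for all } l \ge 1.$$

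First I would dispatch condition \eqref{EqnStarStar}. Integration by parts (or Tonelli, since everything is nonnegative) shows $\int_0^\infty e^{-\tilde\theta t}\,d\mu(t) = \mathbb{E}[\hat\xi(\tilde\theta)]$, which by the master estimate is dominated by $\sum_{l} \exp(-c_{\tilde\theta}\, l^{1-\alpha}) < \infty$ for every $\tilde\theta > 0$. Because $\theta > 1$ by Lemma~\ref{lemma: malthus}, any $\tilde\theta \in (0,\theta)$ is admissible, so \eqref{EqnStarStar} holds.

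For \eqref{EqnStar} it suffices to bound $\mathbb{E}[\hat\xi(\theta)^2]$, as $\mathbb{E}[\hat\xi(\theta)] = 1$ is finite (this is just the Malthusian identity). Expanding $\hat\xi(\theta)^2 = \sum_{k,l} e^{-\theta(\tau_k+\tau_l)}$ and using independence of the $E_i$, for $k \le l$ one has $\tau_k + \tau_l = 2\sum_{i<k}E_i + \sum_{k \le i < l} E_i$, so
$$\mathbb{E}\big[e^{-\theta(\tau_k+\tau_l)}\big] = \prod_{i=0}^{k-1}\frac{f(i)}{f(i)+2\theta}\prod_{i=k}^{l-1}\frac{f(i)}{f(i)+\theta} \le \prod_{i=0}^{l-1}\frac{f(i)}{f(i)+\theta},$$
where the inequality crudely replaces $2\theta$ by $\theta$ in the first block. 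Summing over $k \in \{1,\dots,l\}$ and doubling for symmetry in $(k,l)$ gives $\mathbb{E}[\hat\xi(\theta)^2] \le 2\sum_{l=1}^\infty l\prod_{i=0}^{l-1}\frac{f(i)}{f(i)+\theta} \le 2\sum_{l=1}^\infty l\,e^{-c_\theta\, l^{1-\alpha}} < \infty$, whence $\operatorname{Var}(\hat\xi(\theta)) < \infty$.

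The Laplace-transform bookkeeping, the integration-by-parts identity, and the final summations are all routine; the only place that genuinely uses the hypotheses is the master decay bound, and within it the step where $\alpha < 1$ forces $\sum_j j^{-\alpha}$ to grow like $l^{1-\alpha}$ rather than logarithmically. That divergence of the partial sums is the crux: it is precisely what makes the products decay super-polynomially and is what would fail at $\alpha = 1$. The one nonobvious simplification, which keeps the second-moment step from turning into a genuine nested-sum estimate, is the crude replacement of $2\theta$ by $\theta$; after it, the double sum collapses to $\sum_l l\prod_{i<l}(\cdots)$, absorbed by the same decay bound at the cost of a harmless extra factor of $l$.
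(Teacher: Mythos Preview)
Your argument is correct, and it is a genuinely different route from the paper's. Both proofs eventually rest on the same stretched-exponential product bound $\prod_{i=0}^{l-1}\frac{f(i)}{f(i)+s}\le\exp(-c_s l^{1-\alpha})$, but they reach and use it differently. For $(\star)$ the paper introduces an independent $X_\theta\sim\mathrm{Exp}(\theta)$, rewrites $\hat\xi(\theta)=\mathbb E[\xi(0,X_\theta]\mid\xi]$, and uses Jensen to reduce to the second moment of the integer-valued random variable $\xi_\alpha(0,X_\theta]$, whose tail $\mathbb P(\xi_\alpha(0,X_\theta]\ge k)$ is precisely the same product you compute as $\mathbb E[e^{-\theta\tau_k}]$. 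You instead expand $\hat\xi(\theta)^2$ as a double sum in $(k,l)$ and collapse it with the crude $2\theta\to\theta$ monotonicity, which is more direct and avoids the conditional-expectation detour. For $(\star\star)$ the paper takes a shorter path: it simply dominates $\mu$ by the Yule intensity $\mu_{PA}(t)=e^t-1$ and uses $\theta>1$ to pick $\tilde\theta\in(1,\theta)$, giving a one-line bound that does not even need $\alpha<1$. Your approach instead reuses the master estimate, which costs a little more but has the virtue of making both conditions fall out of a single calculation. In short: the paper trades a clever probabilistic reinterpretation (for $(\star)$) and a cheap comparison (for $(\star\star)$); your version is more uniform and hands-on, and the $2\theta\to\theta$ trick that linearizes the double sum is a nice simplification that the paper does not use.
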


\begin{proof}
We first show that condition~\eqref{EqnStar} is satisfied by following an approach used in Bhamidi~\cite{Bhamidi07}. For $0< \alpha < 1$, let $1 \leq f(i) \leq (i+1)^\alpha$ be a sublinear attraction function and let $\xi$ be the associated point process with Malthusian parameter $\theta$, existing by Lemma~\ref{lemma: malthus}. Let $X_\theta$ be an exponential random variable with rate $\theta$, independent of $\xi$. Defining the random function
\begin{equation*}
\hat \xi(\theta) := \int_0^\infty e^{-\theta t} d\xi(t),
\end{equation*}
we have by Fubini's theorem that
\begin{equation*}
\hat \xi(\theta) = \theta \int_{0}^\infty e^{-\theta t}\xi(0, t]dt = \mathbb E \left[\xi(0, X_\theta] \mid \xi\right].
\end{equation*}
Then
\begin{align*}
\text{Var}(\hat \xi(\theta)) &\leq \mathbb E \left[\hat \xi(\theta)^2\right]\\
&= \mathbb E \left[ \left( \mathbb E \left[ \xi(0, X_\theta] \mid \xi \right] \right)^2 \right]\\
& \stackrel{(a)}{\le} \E\left[\E\left[\xi(0, X_\theta]^2 \mid \xi\right]\right] \\
&= \mathbb E \left[\xi(0, X_\theta]^2\right],
\end{align*}
where inequality $(a)$ follows from Jensen's inequality. Thus, it is enough to derive the bound \mbox{$\mathbb E \left[\xi(0, X_\theta]^2\right] < \infty$}. Let $\xi_{\alpha}$ be the the point process corresponding to the the attraction function $f_{\alpha}(i) = (1+i)^\alpha$. Note that since 
\begin{equation*}
\mathbb E\left[ \xi(0, X_\theta)^2\right] \leq \mathbb E \left[ \xi_{\alpha}(0, X_\theta]^2\right],
\end{equation*}
it is enough to show that 
$$E \left[ \xi_{\alpha}(0, X_\theta]^2\right] < \infty.$$
Note that it is possible to find the exact distribution of the random variable $\xi_{\alpha}(0, X_\theta]$, as follows: The time of the $k^\text{th}$ arrival in the point process $\xi_{\alpha}$ may be written as $\sum_{i=1}^k Y_i$, where $Y_i \sim Exp\left(f_{\alpha}(i-1)\right)$ and the $Y_i$'s are independent. Hence,
\begin{align*}
\mathbb P(\xi_{\alpha}(0, X_\theta] \geq k) &= \mathbb P\left(X_\theta \geq \sum_{i=1}^k Y_i\right) \\
&= \mathbb E\left[e^{-\theta\sum_{i=1}^k Y_i} \right]\\
&= \prod_{i=0}^{k-1} \frac{f_{\alpha}(i)}{\theta + f_{\alpha}(i)} \\
&= \prod_{i=0}^{k-1} \frac{(1+ i)^\alpha}{\theta + (1+ i)^\alpha}.
\end{align*}
The probability mass function of $\xi_{\alpha}(0, X_\theta]$ is thus given by
\begin{align*}
\mathbb P(\xi_{\alpha}(0, X_\theta] = k) &= \prod_{i=0}^{k-1} \frac{(1+ i)^\alpha}{\theta + (1 + i)^\alpha} - \prod_{i=0}^{k} \frac{(1+ i)^\alpha}{\theta + (1 + i)^\alpha}\\
&= \frac{\theta}{\theta + (1+k)^\alpha}\prod_{i=0}^{k-1} \frac{(1+ i)^\alpha}{\theta + (1 + i)^\alpha} \\
& \sim \frac{1}{k^\alpha}\exp \left( -\frac{\theta k^{1-\alpha}}{1-\alpha} \right).
\end{align*}
It is now easy to check that $\mathbb E \left[\xi_\alpha(0, X_\theta]^2\right] < \infty$, and thus, $\mathbb E \left[\xi(0, X_\theta]^2\right] < \infty$.

Finally, we show that condition~\eqref{EqnStarStar} holds. Let $\mu(t)$ be the intensity measure associated with the sublinear preferential attachment process. Let $\tilde \theta$ be such that $1 < \tilde \theta < \theta$. Note that such a parameter $\tilde \theta$ exists by Lemma \ref{lemma: malthus}. As in Lemma \ref{lemma: malthus}, let $\mu_{PA}$ be the mean intensity measure associated with the linear preferential attachment process. Then
\begin{align*}
\int_0^\infty e^{-\tilde\theta t}d\mu(t) &< \int_0^\infty e^{-\tilde\theta t}d\mu_{PA}(t)\\
&\stackrel{(a)}= \int_0^\infty e^{(1-\tilde\theta) t}dt < \infty,
\end{align*}
where equality~$(a)$ holds because $\mu_{PA}(t) = e^t-1$.
\end{proof}

\subsection{Proof of Theorem~\ref{thm: ABC}}

Having verified the conditions~\eqref{EqnStar} and~\eqref{EqnStarStar} via Lemma~\ref{lemma: stars}, we obtain the desired $L^2$ and almost sure convergence by applying Lemmas~\ref{lem: L2} and~\ref{lem: ass}, respectively. The absolute or singular continuity of the limit random variable follows from Lemma~\ref{lem: sing}.

\section{Useful results on trees}
\label{AppTrees}

In this Appendix, we collect three key lemmas concerning trees and tree centroids that we use in our proofs.

\begin{lemma*} [Lemma 2.1 from Jog and Loh~\cite{JogLoh15}] \label{lemma: jogloh1}
For a tree $T$ on $n$ vertices, the following statements hold:
\begin{itemize}
\item[(i)] If $v^*$ is a centroid, then
$$\psi_T(v^*) \leq \frac{n}{2}.$$
\item[(ii)] $T$ can have at most two centroids.
\item[(iii)] If $u^*$ and $v^*$ are two centroids, then $u^*$ and $v^*$ are adjacent vertices. Furthermore,
\begin{equation*}
\psi_T(u^*) = |(T, u^*)_{v^*\downarrow}|, \quad \text{and} \quad \psi_T(v^*) = |(T, v^*)_{u^*\downarrow}|.
\end{equation*}
\end{itemize}
\end{lemma*}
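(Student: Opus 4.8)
The plan is to treat this as the classical structure theorem for tree centroids (Jordan) and derive all three parts directly from the definition of $\psi_T$. The one elementary fact I would isolate first is this: for any neighbor $w$ of a vertex $u$, deleting the edge $\{u,w\}$ splits $T$ into two components, and $|(T,u)_{w\downarrow}|$ is exactly the size of the component containing $w$. Moreover $\psi_T(u)=\max_{w\sim u}|(T,u)_{w\downarrow}|$, because any subtree $(T,u)_{x\downarrow}$ is contained in $(T,u)_{w\downarrow}$, where $w$ is the neighbor of $u$ on the path from $u$ to $x$; so the maximizing subtree may always be taken to hang off a neighbor. With this in hand, all three statements become a matter of tracking component sizes across a single deleted edge.

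For part (i) I would argue by contradiction. Suppose $v^*$ is a centroid but $\psi_T(v^*)>n/2$, and let $w$ be a neighbor of $v^*$ realizing the maximum, so the subtree $S=(T,v^*)_{w\downarrow}$ has size $s=\psi_T(v^*)>n/2$. Re-rooting at $w$, the subtree of $w$ pointing back toward $v^*$ has size $n-s<n/2<s$, while every other subtree hanging off $w$ is a \emph{proper} subtree of $S$ and hence has size at most $s-1<s$. Therefore $\psi_T(w)<s=\psi_T(v^*)$, contradicting the assumption that $v^*$ is a centroid; this forces $\psi_T(v^*)\le n/2$.

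For part (iii) I would take two distinct centroids $u^*$ and $v^*$; let $w$ be the neighbor of $u^*$ on the path toward $v^*$, and cut the edge $\{u^*,w\}$, obtaining a component $A\ni u^*$ of size $a$ and a component of size $b=n-a$ containing $w$ and hence $v^*$. Since $b=|(T,u^*)_{w\downarrow}|\le\psi_T(u^*)\le n/2$ by part (i), we get $a\ge n/2$. Now let $w''$ be the neighbor of $v^*$ pointing toward $u^*$; every vertex of $A$ reaches $v^*$ through $w''$, so $A\subseteq (T,v^*)_{w''\downarrow}$ and $|(T,v^*)_{w''\downarrow}|\ge a\ge n/2$. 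But this quantity is at most $\psi_T(v^*)\le n/2$, which pins down $a=n/2=\psi_T(v^*)$ and forces $(T,v^*)_{w''\downarrow}=A$ exactly. If $u^*$ and $v^*$ were \emph{not} adjacent, then $w\neq v^*$, and $w$ lies on the $v^*$-to-$u^*$ path beyond $w''$, so $w\in (T,v^*)_{w''\downarrow}=A$; but $w$ lies in the other component, a contradiction. Hence $u^*$ and $v^*$ are adjacent, $w=v^*$, and $b=|(T,u^*)_{v^*\downarrow}|$. Combining $\psi_T(u^*)\ge|(T,u^*)_{v^*\downarrow}|=b$ with $a=n/2$ (so $b=n/2$) and part (i) yields $\psi_T(u^*)=n/2=|(T,u^*)_{v^*\downarrow}|$, and the symmetric argument gives $\psi_T(v^*)=|(T,v^*)_{u^*\downarrow}|$, which is the claimed formula. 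Part (ii) then follows immediately: three distinct centroids would be pairwise adjacent by part (iii), producing a triangle and contradicting the acyclicity of $T$, so $T$ has at most two centroids.

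I expect the only delicate step to be the adjacency argument in part (iii): the contradiction hinges on correctly identifying which vertices lie in $(T,v^*)_{w''\downarrow}$ versus in the component $A$, and on the fact that equality $|(T,v^*)_{w''\downarrow}|=a$ forces set equality $(T,v^*)_{w''\downarrow}=A$. Everything else is routine counting across a single cut edge, and parts (i) and (ii) reduce cleanly to that lemma once the neighbor-realizes-the-maximum observation is recorded.
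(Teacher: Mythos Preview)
Your argument is correct and is the standard Jordan centroid proof. Note, however, that the paper does not actually prove this lemma: it is imported verbatim as ``Lemma 2.1 from Jog and Loh~\cite{JogLoh15}'' and stated without proof in Appendix~\ref{AppTrees}, so there is no in-paper argument to compare against. Your neighbor-realizes-the-maximum observation, the contradiction in part~(i) by stepping toward the heavy subtree, the size-pinching in part~(iii) forcing adjacency, and the triangle obstruction for part~(ii) are exactly the classical route one would expect the cited paper to take.

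One small point worth tightening in part~(iii): you write that $|(T,v^*)_{w''\downarrow}|=a$ ``forces set equality $(T,v^*)_{w''\downarrow}=A$''. This is fine, but you should make explicit that it follows because you already have the containment $A\subseteq (T,v^*)_{w''\downarrow}$ together with equal finite cardinalities; as written, a reader might momentarily wonder whether size equality alone is being invoked. Otherwise the proof is complete and self-contained.
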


\begin{lemma*} [Lemma 2.3 from Jog and Loh~\cite{JogLoh15}]\label{lemma: jogloh3}
Let $\{T_n\}_{n \ge 1}$ be a sequence of growing trees, with $V(T_n) = \{v_1, \dots, v_n\}$. At time $n+1$, we have the inequality
\begin{align*}
|(T_{n+1}, v_{n+1})_{v^*(n)\downarrow}| \geq \frac{n}{2}.
\end{align*}
\end{lemma*}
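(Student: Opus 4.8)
The plan is to reduce the claim to the defining property of a centroid, namely the bound $\psi_{T_n}(v^*(n)) \le n/2$ furnished by Lemma~\ref{lemma: jogloh1}(i). The key observation is that the subtree $(T_{n+1}, v_{n+1})_{v^*(n)\downarrow}$ is nothing but the complement, inside $T_{n+1}$, of a single subtree hanging off the centroid $v^*(n)$; bounding the size of that hanging subtree therefore immediately controls the size of the subtree of interest.

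Concretely, I would first locate the attachment. The new vertex $v_{n+1}$ joins $T_n$ through a single edge, so in $T_{n+1}$ there is a unique path from $v^*(n)$ to $v_{n+1}$; let $w$ be the neighbor of $v^*(n)$ along this path. Removing the edge $\{v^*(n), w\}$ from $T_{n+1}$ splits the tree into two components: the one containing $v^*(n)$ is precisely $(T_{n+1}, v_{n+1})_{v^*(n)\downarrow}$ (everything directed away from $v_{n+1}$ through $v^*(n)$), while the other contains $w$ and $v_{n+1}$. I would then count: the component containing $w$ equals $(T_n, v^*(n))_{w\downarrow}$ together with the extra vertex $v_{n+1}$, so
\[
|(T_{n+1}, v_{n+1})_{v^*(n)\downarrow}| = (n+1) - \bigl(|(T_n, v^*(n))_{w\downarrow}| + 1\bigr) = n - |(T_n, v^*(n))_{w\downarrow}|.
\]
Since $(T_n, v^*(n))_{w\downarrow}$ is one of the subtrees hanging off $v^*(n)$ in $T_n$ and $w \neq v^*(n)$, its size is at most $\psi_{T_n}(v^*(n)) \le n/2$ by Lemma~\ref{lemma: jogloh1}(i), and the desired bound $|(T_{n+1}, v_{n+1})_{v^*(n)\downarrow}| \ge n - n/2 = n/2$ follows.

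The only case needing separate bookkeeping is when $v_{n+1}$ attaches directly to $v^*(n)$: then $w = v_{n+1}$, there is no corresponding subtree inside $T_n$, and $(T_{n+1}, v_{n+1})_{v^*(n)\downarrow}$ is simply all of $T_n$, of size $n \ge n/2$. I would fold this into the general argument by reading $|(T_n, v^*(n))_{w\downarrow}|$ as $0$ in that degenerate situation. I do not expect a substantive obstacle here: the entire content of the lemma is the centroid bound $\psi_{T_n}(v^*(n)) \le n/2$, and the only things requiring genuine care are the correct identification of the two sides of the cut and the off-by-one accounting arising from the newly added vertex $v_{n+1}$. Note also that the younger/older tie-breaking convention for $v^*(n)$ is irrelevant, since the bound $\psi_{T_n}(v^*(n)) \le n/2$ holds for \emph{any} centroid.
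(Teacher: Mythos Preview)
Your proof is correct and complete. Note, however, that the paper does not actually supply its own proof of this lemma: it is quoted in Appendix~\ref{AppTrees} as Lemma~2.3 from the earlier paper~\cite{JogLoh15}, so there is no ``paper's proof'' to compare against. That said, your argument is exactly the natural one and is almost certainly what the original source does: identify the neighbor $w$ of $v^*(n)$ on the $v^*(n)$--$v_{n+1}$ path, observe that the complement of $(T_{n+1},v_{n+1})_{v^*(n)\downarrow}$ is $(T_n,v^*(n))_{w\downarrow}\cup\{v_{n+1}\}$, and invoke Lemma~\ref{lemma: jogloh1}(i). The degenerate case $w=v_{n+1}$ and the off-by-one accounting are both handled correctly.
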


\begin{lemma*} \label{lemma: jogloh2.1}
Consider a tree $T$ and pick any two vertices $u, v \in V(T)$. Then we have the following result:
\begin{equation*}
\psi_T(u) \leq \psi_T(v) \iff |(T,v)_{u \downarrow}| \geq |(T, u)_{v \downarrow}|.
\end{equation*}
\end{lemma*}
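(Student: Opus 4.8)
The plan is to reduce the whole statement to an analysis of the unique path $u = w_0, w_1, \dots, w_k = v$ joining the two vertices, reading off both $\psi_T(u)$ and $\psi_T(v)$ from the sizes of the two extreme ``dangling'' pieces. Write $a = |(T,u)_{v\downarrow}|$ and $b = |(T,v)_{u\downarrow}|$, so the claim becomes $\psi_T(u) \le \psi_T(v) \iff b \ge a$. The first step is to observe that $(T,u)_{v\downarrow}$ is exactly the component containing $v$ after deleting the edge $(w_{k-1},v)$, while $(T,v)_{u\downarrow}$ is the component containing $u$ after deleting $(u,w_1)$. These two vertex sets are disjoint, so $a + b \le n$, with equality precisely when $u$ and $v$ are adjacent.

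Next I would express each $\psi$-value as a maximum of two quantities. Since the largest subtree hanging off a vertex is always attained at one of its neighbors (by containment of subtrees), I can split the neighbor-subtrees of $u$ into the single one pointing toward $v$, namely $(T,u)_{w_1\downarrow}$, whose complement is exactly the set counted by $b$ and which therefore has size $n-b$, and the remaining subtrees pointing away from $v$, whose sizes sum to $b-1$ because together with $u$ they constitute the set counted by $b$. Letting $\beta_u$ denote the heaviest ``away'' subtree, this gives $\psi_T(u) = \max(n-b,\beta_u)$ with $\beta_u \le b-1$, and symmetrically $\psi_T(v) = \max(n-a,\beta_v)$ with $\beta_v \le a-1$.

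The crux is then the elementary but decisive chain $\beta_u \le b-1 \le n-a-1 < n-a$, obtained by combining $\beta_u \le b-1$ with $a+b \le n$. This says that no sideways subtree at $u$ can ever match the subtree $v$ sees in the direction of $u$, so the comparison between $\psi_T(u)$ and $\psi_T(v)$ is governed entirely by the two ``toward'' terms $n-b$ and $n-a$. Assuming $b \ge a$, I then have $n-b \le n-a \le \psi_T(v)$ and $\beta_u < n-a \le \psi_T(v)$, whence $\psi_T(u) \le \psi_T(v)$; the same computation gives a strict inequality when $b > a$ and an equality when $a = b$. Running the identical argument with the roles of $u$ and $v$ exchanged yields $b < a \Rightarrow \psi_T(u) > \psi_T(v)$. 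The three cases $b > a \Rightarrow \psi_T(u) < \psi_T(v)$, $b = a \Rightarrow \psi_T(u) = \psi_T(v)$, and $b < a \Rightarrow \psi_T(u) > \psi_T(v)$ together give the claimed equivalence.

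I expect the only real work to lie in the bookkeeping of the second and third steps: correctly recognizing that the ``away'' subtrees at $u$ are confined to the region beyond $u$ (the set counted by $b$), and that the two extreme pieces are disjoint so that $a+b \le n$. Once the inequality $\beta_u < n-a$ is in hand, the equivalence falls out of a one-line comparison, so there is no genuine analytic obstacle; the care required is purely combinatorial, in handling the degenerate cases (adjacent $u,v$, or an endpoint that is a leaf) uniformly within the same decomposition.
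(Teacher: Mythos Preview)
Your argument is correct and follows essentially the same route as the paper's proof: both decompose the neighbor-subtrees at $u$ into the one pointing toward $v$ (of size $n-b$, the paper's $b+c$) and the ``away'' subtrees bounded in total by $b-1$ (the paper's $a-1$), then compare against the toward-subtree at $v$ of size $n-a$ (the paper's $a+c$). Your presentation is slightly cleaner---writing everything in terms of $n-a$, $n-b$ rather than introducing the residual $c$, and making the key chain $\beta_u \le b-1 \le n-a-1 < n-a$ explicit---but the decomposition and the logic are identical.
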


\begin{proof}
Let $u'$ and $v'$ be the neighboring vertices to $u$ and $v$, respectively, in the path from $u$ to $v$. To simplify notation, denote $|(T,v)_{u \downarrow}| = a$ and $|(T, u)_{v \downarrow}| = b$.

First suppose $a \geq b$. Let $c = |T| - a - b$ be number of vertices not in either of the two subtrees. (See Figure~\ref{FigABC}.) We have the following inequality:
\begin{equation*}
\psi_T(v) \geq |(T,v)_{v' \downarrow}| = a + c.
\end{equation*}
We also have the inequality 
\begin{align*}
\psi_T(u) &\leq \max\left( |(T,v)|_{u \downarrow}-1, \; |(T,u)_{u' \downarrow}|\right)\\
&= \max \left(a-1, \; b + c \right)\\
&\stackrel{(a)} \leq a + c,
\end{align*}
where $(a)$ follows from our assumption $a \geq b$. Combining the two inequalities, we then have 
\begin{equation*}
\psi_T(u) \leq a+c \leq \psi_T(v),
\end{equation*}
which is one direction of the implication.

\begin{figure}
\begin{center}
\includegraphics[scale = 0.5]{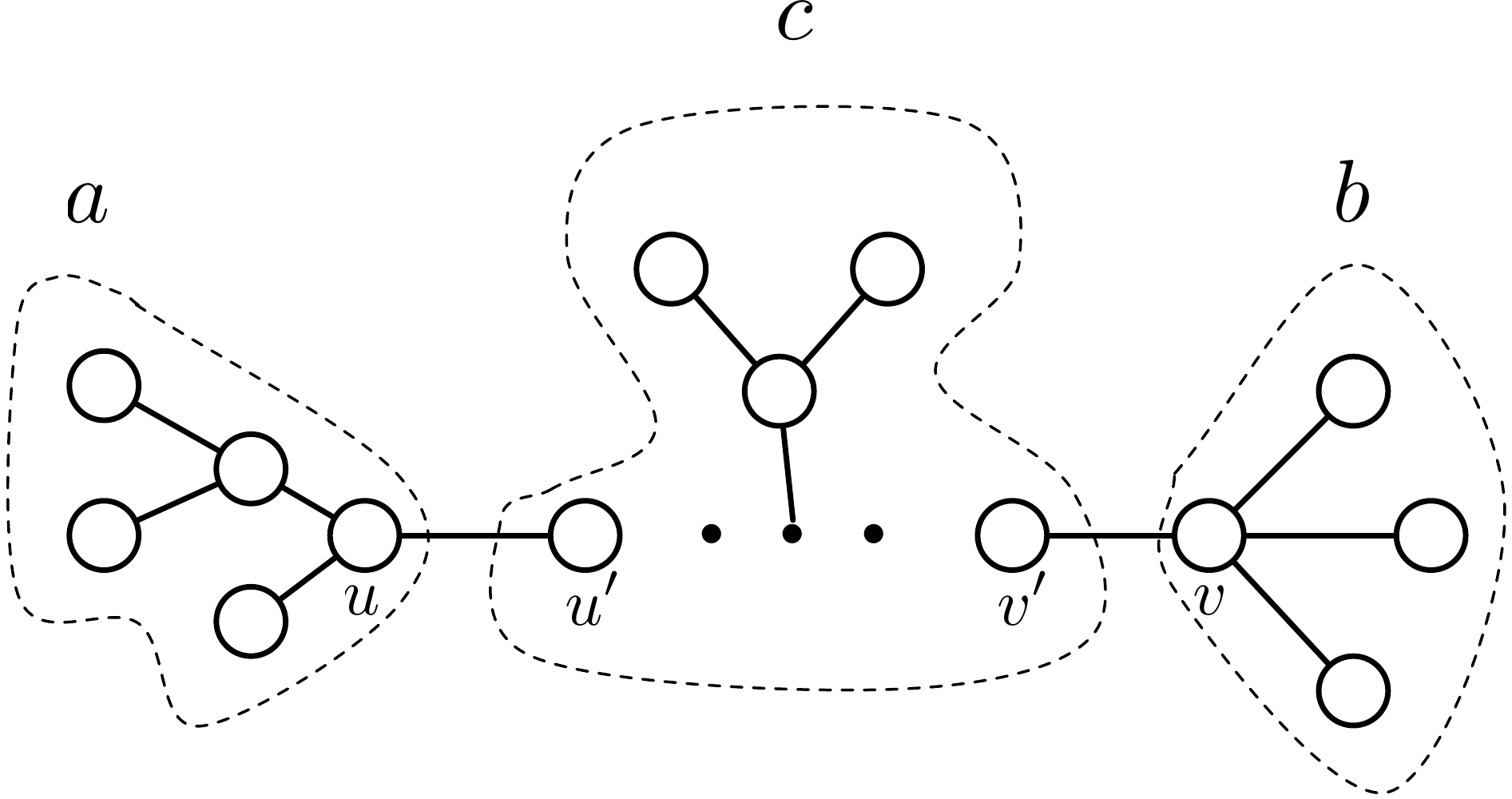}
\end{center}
\caption{Subtrees with sizes $a$, $b$, and $c$ from Lemma \ref{lemma: jogloh2.1}.}
\label{FigABC}
\end{figure}

If instead $a < b$, the same steps establish the string of inequalities
\begin{equation*}
\psi_T(v) \le \max(b-1, \; a+c) < b+c \le \psi_T(v),
\end{equation*}
providing the other direction of the implication.
\end{proof}


\section{Supporting proofs for Theorem~\ref{thm: adam}}\label{appendix: adam}

In this Appendix, we provide proofs of the lemmas used to derive Theorem~\ref{thm: adam}.

\subsection{Proof of Lemma~\ref{LemBhalu}}\label{appendix: LemBhalu}
\label{AppLemBhalu}

First note that we clearly have $\psi(v_1) \leq \max\left(|T^n_{1,2}|, |T^n_{2,2}|\right)$ and $n = |T^n_{1,2}| + |T^n_{2,2}|$. Thus,
\begin{align}
\label{EqnMalt}
\mathbb P(\psi(v_1) \geq (1-\delta)n) &\leq \mathbb P\left(\frac{\max\left(|T^n_{1,2}|, |T^n_{2,2}|\right)}{|T^n_{1,2}| + |T^n_{2,2}|} \geq (1-\delta)\right) \notag \\
&\leq P\left(\frac{|T^n_{1,2}|}{|T^n_{1,2}| + |T^n_{2,2}|} \geq (1-\delta)\right) \notag \\
& \qquad + P\left(\frac{|T^n_{2,2}|}{|T^n_{1,2}| + |T^n_{2,2}|} \geq (1-\delta)\right).
\end{align}

Consider the continuous time versions of the growing tree processes, and let $\theta$ be the Malthusian parameter of the point process associated with $T^t_{1,2}$. Then
\begin{equation*}
\frac{|T^t_{1,2}|}{|T^t_{1,2}|+|T^t_{2,2}|} = \frac{e^{-\theta t}|T^t_{1,2}|}{e^{-\theta t}|T^t_{1,2}|+e^{-\theta t}|T^t_{2,2}|} \stackrel{a.s.} \longrightarrow \frac{W_1}{W_1+W_2} := W,
\end{equation*}
where by Lemma~\ref{lem: sing}, the random variable $W$ is absolutely or singular continuous and is supported on the entire interval $[0,1]$. In particular, we may choose $\delta_0' > 0$ such that $\mathbb P(W \geq 1-\delta_0) < \frac{\epsilon}{4}$. This implies that
$$\lim\sup_{t \to \infty} \mprob\left(\frac{|T^t_{1,2}|}{|T^t_{1,2}| + |T^t_{2,2}|} \geq (1-\delta_0')\right) < \frac{\epsilon}{4}.$$
Using a similar argument for the second term, we conclude that there exists a $\delta_0''$ such that 
$$\lim\sup_{t \to \infty} \mprob\left(\frac{|T^t_{2,2}|}{|T^t_{1,2}| + |T^t_{2,2}|} \geq (1-\delta_0'')\right) < \frac{\epsilon}{4}.$$
Taking $\delta_0 = \min(\delta_0', \delta_0'')$ and substituting back into inequality~\eqref{EqnMalt}, we obtain the desired bound.

\subsection{Proof of Lemma~\ref{LemHouston}}\label{appendix: LemHouston}
\label{AppLemHouston}
As noted in the proof sketch, for any $i > K$, we have
$$\psi(v_i) \geq \min_{1 \leq k \leq K} \sum_{j = 1, j \neq k}^K |T^n_{j,K}|.$$
Hence,
\begin{equation}
\label{EqnHari}
\mathbb P(\exists i > K : \psi(v_i) \leq (1-\delta_0)n) \leq \mathbb P\left(\exists 1 \leq k \leq K : \sum_{j=1, j\neq k}^K |T^n_{j,K}| \leq (1-\delta_0)n\right).
\end{equation}
We can break up the right-hand expression as follows: From Theorem 22 in Bhamidi~\cite{Bhamidi07}, the maximum degree of a sublinear preferential attachment model with attraction function \mbox{$f(i) = (i+1)^\alpha$} scales as $(\log n)^{\frac{1}{1-\alpha}}$. Concretely, there exists a constant $M$ such that 
\begin{equation*}
\lim\sup_{n \to \infty} \mathbb P \left( \text{Max-Deg} (T_n) > ( \log n)^{\frac{1}{1-\alpha}}M\right) < \frac{\epsilon}{4}.
\end{equation*}
Therefore, we may choose $N$ large enough such that 
\begin{equation}\label{eq: k and N}
\mathbb P \left( \text{Max-Deg} (T_n) > (\log n)^{\frac{1}{1-\alpha}}M\right) < \frac{\epsilon}{4}, \quad \text{for all } n \geq N.
\end{equation}
Note that $M$ depends only on $\epsilon$ and the distribution of the normalized maximum degree that exists in the limit of the the $\alpha$-sublinear attachment tree growth process. Thus, fixing $\epsilon$ fixes $M$, as well. Having chosen $M$, note that $N$ depends on how fast the normalized distribution of the maximum degree converges to the fixed distribution, and on $\epsilon$. Since the former is solely a property of the sublinear attachment process, we observe that $N$ also depends only on $\epsilon$.
We now pick a value $K > N$, and define the event
\begin{equation*}
E_{K} \defn \left\{\text{Max-Deg}(T_{K}) \leq (\log K)^{\frac{1}{1-\alpha}} M\right\}.
\end{equation*}
The right-hand side of inequality~\eqref{EqnHari} may be bounded by
\begin{align*}
& \mathbb P\left(\exists 1 \leq k \leq K : \sum_{j=1, j\neq k}^{K} |T^n_{j,K}| \leq (1-\delta_0)n   ~\Big|~   E_{K}\right) \nonumber \\
&\qquad + \mathbb P \left(E_{K} \right)\\
&\stackrel{(a)} \leq \mprob \left(\exists 1 \leq k \leq K : \sum_{j=1, j\neq k}^{K} |T^n_{j,K}| \leq (1-\delta_0)n   ~\Big|~   E_{K} \right)\nonumber \\
&\qquad + \frac{\epsilon}{4}\\
&\stackrel{(b)}\leq \sum_{k=1}^{K} \mathbb P \left(\sum_{j=1, j\neq k}^{K} |T^n_{j,K}| \leq (1-\delta_0)n   ~\Big|~   E_{K} \right) + \frac{\epsilon}{4}.
\end{align*}
Here, $(a)$ follows from equation~\eqref{eq: k and N} and the choice of $K > N$. Step $(b)$ is a simple application of the union bound.
Now fix $k=1$, and consider the probability
\begin{align*}
\mprob \left(\sum_{j=2}^{K} |T^n_{j,K}| \leq (1-\delta_0)n ~\Big|~ E_{K} \right)
%
%
\stackrel{(a)} = \mathbb P \left(\sum_{j=2}^{K} |T^n_{j,K}| \leq \left(\frac{1-\delta_0}{\delta_0} \right)|T^n_{1,K}| ~\Big|~  E_{K} \right), \label{eq: T1K}
%
\end{align*}
where step $(a)$ follows since $\sum_{j=1}^K |T^n_{j, K}|$ is simply the total number of vertices, which is $n$.
Since the degree of $v_1$ is at most $(\log K)^{\frac{1}{1-\alpha}} M$ conditioned on $E_K$, we may bound the above probability via stochastic domination, as follows: At time $n = K$, replace $v_1$ by $\lceil (\log K)^{\frac{1}{1-\alpha}} M\rceil$ isolated vertices, and replace $v_j$ by a single isolated vertex, for each $2 \leq j \leq K$. The crucial step is to observe that by Lemma \ref{lemma: separation}, this replacement expedites the growth of $|T^t_{1,K}|$ and retards the growth of $\sum_{j=2}^K |T^t_{j,K}|$. Applying Lemma~\ref{lemma: poling} to the i.i.d.\ limit random variables $W_i$ and $\tilde W_i$ corresponding to the renormalized populations of the continuous time CMJ processes, we then have
\begin{align*}
&\lim\sup_{t \to \infty} \mathbb P \left(e^{-\theta t}\sum_{j=2}^K |T^t_{j,K}| \leq \left(\frac{1-\delta_0}{\delta_0} \right) e^{-\theta t} |T^t_{1,K}| ~\Big|~ E_K \right)\\
& \qquad \qquad \qquad \qquad \qquad \qquad \leq \mathbb P \left( \sum_{i=1}^{K-1} W_i \leq \left(\frac{1-\delta_0}{\delta_0}\right) \sum_{i=1}^{\lceil(\log K)^{\frac{1}{1-\alpha}} M \rceil} \tilde W_i\right)\\
& \qquad \qquad \qquad \qquad \qquad \qquad \leq \mathbb P \left( \sum_{i=1}^{K-1} W_i \leq   \tilde U \right),
%
\end{align*}
where  $\tilde U$ is the random variable $\left(\frac{1-\delta_0}{\delta_0}\right)\sum_{i=1}^{\lceil(\log K)^{\frac{1}{1-\alpha}} M\rceil} \tilde W_i$. In anticipation of using Lemma \ref{lemma: poling}, we bound $\mathbb E[\tilde U^2]$ as follows:
\begin{align*}
\mathbb E \left[\tilde U^2\right] &= \left(\frac{1-\delta_0}{\delta_0}\right)^2\mathbb E \left[\left(\sum_{i=1}^{\lceil(\log K)^{\frac{1}{1-\alpha}} M\rceil} \tilde W_i\right)^2\right]\\
&\stackrel{(a)}\leq  \left(\frac{1-\delta_0}{\delta_0}\right)^2\lceil(\log K)^{\frac{1}{1-\alpha}} M\rceil^2 \mathbb E \tilde W_1^2,
\end{align*}
where step $(a)$ is true because for all $1 \leq i, j \leq \lceil(\log K)^{\frac{1}{1-\alpha}} M\rceil$, we have $$\mathbb E[\tilde W_i \tilde W_j] = \mathbb E[\tilde W_i]^2 \leq \mathbb E[\tilde W_i^2].$$
Now we apply Lemma \ref{lemma: poling} to conclude that
\begin{align*}
P \left( \sum_{i=1}^{K-1} W_i \leq  \tilde U \right) &\leq  \frac{C\times \left(\frac{1-\delta_0}{\delta_0}\right)^2\lceil(\log K)^{\frac{1}{1-\alpha}} M\rceil^2 \mathbb E \tilde W_1^2}{(K-1)^2}\\
&= \frac{C_1(\log K)^{\frac{2}{1-\alpha}}}{(K-1)^2}, 
\end{align*} 
where the constant $C_1 = \left(\frac{1-\delta_0}{\delta_0}\right)^2 \times C \times M^2 \times \mathbb E \tilde W_1^2$ depends only on $\epsilon$, since by Lemma~\ref{lemma: poling}, the constant $C$ depends only on the distribution of $\tilde W_i$, which in turn depends only on the sublinear preferential attachment growth process and is therefore fixed. Arguing similarly, $\mathbb E \tilde W_1^2$ is again a fixed constant. Also, as noted earlier, $\delta_0$ and $M$ depend only on $\epsilon$. 

Since such an inequality holds for all values $1 \leq k \leq K$, substituting back into inequality~\eqref{EqnHari} and applying a union bound yields
\begin{equation*}
\lim\sup_{n\to \infty} \mathbb P\left(\exists i > K : \psi(v_i) \leq (1-\delta_0)n\right) \leq K\frac{C_1(\log K)^{\frac{2}{1-\alpha}}}{(K-1)^2} + \frac{\epsilon}{4}.
\end{equation*}
We now choose $K>N$ sufficiently large so that $\frac{C_1K(\log K)^{\frac{2}{1-\alpha}}}{(K-1)^2} < \frac{\epsilon}{4}$, establishing the desired inequality.

\section{Additional technical lemmas}
\label{appendix: wall}

In this Appendix, we state and prove a useful Hoeffding bound for sums of independent, nonnegative random variables.

\begin{lemma*} \label{lemma: poling}
Let $X_1, X_2, \dots, X_n$ be i.i.d.\ random variables distributed according to $Z$, such that $Z\geq 0$ almost surely and $\mathbb E [Z^2] < \infty$. Let $Y$ be a random variable independent of $X_i$'s satisfying $Y>0$ almost surely and $\mathbb E[Y^2] < \infty$. Then
\begin{equation*}
\mathbb P \left(\sum_{i=1}^n X_i \leq Y\right) \leq \frac{C \, \E[Y^2]}{n^2},
\end{equation*}
for some constant $C$ depending only on the distribution of $Z$.
\end{lemma*}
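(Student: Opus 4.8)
The plan is to split the event $\{\sum_i X_i\le Y\}$ according to whether the sum $S_n:=\sum_{i=1}^n X_i$ falls below half of its mean or the variable $Y$ is atypically large, and to control the two resulting pieces by a lower-tail concentration estimate and a second-moment (Markov) estimate, respectively. Write $\mu:=\mathbb{E}[Z]$; since $Z\ge 0$ and (as holds for the strictly positive limit variables $W$ in our applications) $Z$ is non-degenerate, we have $\mu>0$. Pathwise,
$$\{S_n\le Y\}\subseteq\Big\{S_n\le\tfrac{n\mu}{2}\Big\}\cup\Big\{Y>\tfrac{n\mu}{2}\Big\},$$
so that
$$\mathbb{P}(S_n\le Y)\le\mathbb{P}\Big(S_n\le\tfrac{n\mu}{2}\Big)+\mathbb{P}\Big(Y>\tfrac{n\mu}{2}\Big).$$
The second term is handled at once by Markov's inequality applied to $Y^2$:
$$\mathbb{P}\Big(Y>\tfrac{n\mu}{2}\Big)\le\frac{\mathbb{E}[Y^2]}{(n\mu/2)^2}=\frac{4}{\mu^2}\cdot\frac{\mathbb{E}[Y^2]}{n^2},$$
which already has the advertised form, with a constant depending only on $\mu$ and hence only on the law of $Z$.

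The crux is to show that the lower-tail probability $\mathbb{P}(S_n\le n\mu/2)$ decays at least like $1/n^2$ using only the second-moment hypothesis. The key point is that lower tails of sums of nonnegative variables require no upper-moment control, since truncating from above only shrinks the sum. For a level $b>0$ set $\bar X_i:=X_i\wedge b$ and $\bar\mu_b:=\mathbb{E}[Z\wedge b]$. Because $\bar\mu_b\uparrow\mu$ as $b\to\infty$ by monotone convergence, I fix $b$ (depending only on the law of $Z$) large enough that $\bar\mu_b\ge\tfrac{3\mu}{4}$. As $\sum_i\bar X_i\le S_n$, we have $\{S_n\le n\mu/2\}\subseteq\{\sum_i\bar X_i\le n\mu/2\}$, and $\tfrac{n\mu}{2}\le n\bar\mu_b-\tfrac{n\mu}{4}$, so the latter is a genuine lower-deviation event for the bounded i.i.d.\ summands $\bar X_i\in[0,b]$. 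Hoeffding's inequality then gives
$$\mathbb{P}\Big(S_n\le\tfrac{n\mu}{2}\Big)\le\mathbb{P}\Big(\sum_{i=1}^n\bar X_i-n\bar\mu_b\le-\tfrac{n\mu}{4}\Big)\le\exp\Big(-\frac{2(n\mu/4)^2}{nb^2}\Big)=\exp\Big(-\frac{\mu^2}{8b^2}\,n\Big).$$
Setting $c:=\mu^2/(8b^2)>0$ and $C_0:=\sup_{n\ge1}n^2e^{-cn}<\infty$ (finite, and depending only on the law of $Z$), this exponential decay yields $\mathbb{P}(S_n\le n\mu/2)\le C_0/n^2$, which is much stronger than needed.

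Combining the two bounds gives $\mathbb{P}(S_n\le Y)\le\big(C_0+\tfrac{4}{\mu^2}\mathbb{E}[Y^2]\big)/n^2$, and absorbing the additive $C_0$ into the $\mathbb{E}[Y^2]$ term—using that $\mathbb{E}[Y^2]$ is a fixed positive quantity for the limit random variables arising in our applications—produces the claimed bound $\mathbb{P}(S_n\le Y)\le C\,\mathbb{E}[Y^2]/n^2$ with $C$ depending only on the distribution of $Z$. The main obstacle, and the step meriting the most care, is exactly this lower-tail estimate: a plain Chebyshev bound on $\mathbb{P}(S_n\le n\mu/2)$ yields only the rate $1/n$, so the truncation-plus-Hoeffding device is essential to upgrade the decay to exponential without assuming any moment beyond the second. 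A secondary item to verify is the calibration of the truncation level $b$, which must simultaneously keep $\bar\mu_b$ bounded away from $0$ and render the summands bounded so that Hoeffding applies. (Note that, in this route, independence of $Y$ from the $X_i$ is not even needed, since the decomposition is a pathwise inclusion.)
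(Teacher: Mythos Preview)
Your proposal is correct and follows essentially the same route as the paper: split $\{S_n\le Y\}$ into $\{S_n\le n\mu/2\}$ and $\{Y>n\mu/2\}$, handle the former by truncating $X_i$ at a level chosen so the truncated mean stays bounded below and then applying Hoeffding, and handle the latter by Markov on $Y^2$. Your caveat about absorbing the additive $C_0$ into the $\mathbb{E}[Y^2]$ term (which requires $\mathbb{E}[Y^2]$ bounded away from zero, not guaranteed by the hypotheses but true in the applications) is a genuine subtlety that the paper's proof also glosses over, and your remark that independence of $Y$ from the $X_i$'s is never used is a valid side observation.
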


\begin{proof}
Define $W_i = \min\{X_i, M\}$, where the constant $M$ is chosen such that $\mathbb E [W_i] \geq \frac{\mathbb E [X_i]}{2}$. Since $W_i \leq X_i$, we have
\begin{align}
\label{EqnNM}
& \mathbb P\left( \frac{1}{n} \sum_{i=1}^n X_i - \mathbb E [X_i] \leq t\right) \leq \mathbb P\left( \frac{1}{n} \sum_{i=1}^n W_i - \mathbb E [X_i] \leq t\right) \notag \\
& \qquad \qquad \qquad \leq \mathbb P\left( \frac{1}{n} \sum_{i=1}^n W_i -  2\mathbb E [W_i] \leq t\right) \notag \\
& \qquad \qquad \qquad = \mathbb P\left( \frac{1}{n} \sum_{i=1}^n W_i -  \mathbb E [W_i] \leq t + \mathbb E [W_i]\right) \notag \\
& \qquad \qquad \qquad \leq C_1\exp(-nt^2C_2),
\end{align}
for suitable constants $C_1$ and $C_2$, where the last inequality follows from Hoeffding's inequality. Let
\begin{equation*}
E_1 := \left\{\frac{1}{n} \sum_{i=1}^n X_i - \mathbb E [X_i] \leq -\frac{\mathbb E [X_i]}{2}\right\}.
\end{equation*}
Then
\begin{align*}
\mathbb P\left( \sum_{i=1}^n X_i \leq Y\right) \leq \mathbb P(E_1) + \mathbb P\left(Y \geq \frac{n \mathbb E [X_i]}{2}\right).
\end{align*}
Note that by Markov's inequality, we have the bound
\begin{equation*}
\mathbb P\left(Y \geq \frac{n\mathbb E [X_i]}{2}\right) = \mathbb P\left(Y^2 \ge \frac{n^2 \E[X_i]^2}{4}\right) \leq \frac{4 \mathbb E [Y^2]}{n^2 \mathbb E [X_i]^2} = \frac{C_3 \, \E[Y^2]}{n^2},
\end{equation*}
for a suitable constant $C_3$. Since $\mathbb P(E_1)$ decays exponentially in $n$ by inequality~\eqref{EqnNM}, we may find another constant $C_4$ such that
\begin{align*}
\mathbb P\left(\sum_{i=1}^n X_i \le Y\right) \leq \frac{C_4 \, \E[Y^2]}{n^2},
\end{align*}
as claimed.
\end{proof}

\begin{lemma*} \label{lemma: separation}
Consider a CMJ process initiated from the following state: The root node gives birth according to a shifted point process $\xi_d$ such that
$$\mathbb P\left(\xi_d(t+dt) - \xi_d(t) = 1 \mid \xi_d(t)=i\right) = f(i+d)dt + o(dt),$$
where $f$ is the attraction function for an $\alpha$-sublinear preferential attachment process. Apart from the root node, every other node gives birth according the point process $\xi$ driven by the function $f(i) = (i+1)^\alpha$. Let the population of this CMJ process be denoted by $H(t)$. Let $X_i(t)$ for $1 \leq i \leq d+1$ be i.i.d.\ CMJ processes initiated from a single point. We claim that $\sum_{i=1}^{d+1} X_i(t)$ stochastically dominates $H(t)$.
\end{lemma*}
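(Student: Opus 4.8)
The plan is to exploit the branching (i.i.d.) structure of the CMJ process to decompose each population into its root(s) together with the independent subtrees hanging off the root(s), and then to build an explicit \emph{pathwise} coupling under which $\sum_{i=1}^{d+1} X_i(t) \ge H(t)$ for all $t$ almost surely; stochastic domination then follows at once.

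First I would record the decompositions afforded by the branching property. In $H$, the root $r$ produces children at times $\sigma_1 \le \sigma_2 \le \cdots$ according to $\xi_d$, and conditioned on these times each child initiates an independent standard CMJ process (driven by $f(i)=(i+1)^\alpha$). Writing $N_H(t)$ for the number of children of $r$ born by time $t$ and $\Pi^H_k$ for the population process of the subtree rooted at the $k$-th such child, we have $H(t) = 1 + \sum_{k=1}^{N_H(t)} \Pi^H_k(t-\sigma_k)$. For the $X$-system I pool the children of all $d+1$ roots into a single stream born at times $\rho_1 \le \rho_2 \le \cdots$; again each spawns an independent standard subtree $\Pi^X_k$, and $\sum_{i=1}^{d+1} X_i(t) = (d+1) + \sum_{k=1}^{N_X(t)} \Pi^X_k(t-\rho_k)$, where $N_X(t)$ is the total number of root-children born by time $t$.

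The crux is a rate comparison showing that the pooled root-birth stream of the $X$-system runs at least as fast as the shifted root-birth stream of $H$. Since $\alpha \in (0,1)$, the map $x \mapsto x^\alpha$ is subadditive on $(0,\infty)$, so whenever the $d+1$ roots currently have $c_1, \dots, c_{d+1}$ children with $\sum_j c_j = m$, the instantaneous pooled birth rate satisfies $\sum_{j=1}^{d+1} f(c_j) = \sum_{j=1}^{d+1}(c_j+1)^\alpha \ge \bigl(\sum_{j=1}^{d+1}(c_j+1)\bigr)^\alpha = (m+d+1)^\alpha = f(m+d)$, and $f(m+d)$ is precisely the rate of $\xi_d$ when $r$ has produced $m$ children. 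Thus, \emph{regardless} of how the children are distributed among the roots, the hazard of $N_X$ at total level $m$ dominates the hazard of $N_H$ at level $m$. By the standard stochastic monotonicity of pure-birth counting processes whose hazards are ordered level-by-level, I can couple the two streams so that $\rho_k \le \sigma_k$ for every $k$ (and in particular $N_X(t) \ge N_H(t)$ for all $t$).

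It remains to assemble the coupling. Because the internal evolution of each subtree is independent of the root clocks, I may couple the $k$-th subtrees of the two systems to share a single standard-CMJ realization $\Pi_k$, so that $\Pi^X_k = \Pi^H_k = \Pi_k$. Since individuals never die, $\Pi_k$ is nondecreasing, and $\rho_k \le \sigma_k$ gives $\Pi_k(t-\rho_k) \ge \Pi_k(t-\sigma_k)$ whenever $\sigma_k \le t$. Combining this with $N_X(t) \ge N_H(t)$, the baseline bound $d+1 \ge 1$, and the nonnegativity of the surplus subtrees indexed $N_H(t) < k \le N_X(t)$, we obtain
\[
\sum_{i=1}^{d+1} X_i(t) = (d+1) + \sum_{k=1}^{N_X(t)} \Pi_k(t-\rho_k) \;\ge\; 1 + \sum_{k=1}^{N_H(t)} \Pi_k(t-\sigma_k) = H(t),
\]
pathwise for all $t$, which is exactly the claimed domination. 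I expect the main obstacle to be the rate-comparison step: one must verify that the pooled rate dominates the shifted single-root rate \emph{uniformly} over all configurations of the $d+1$ roots, which is precisely where the subadditivity of $x \mapsto x^\alpha$ (equivalently, the sublinearity of $f$) is indispensable; the surrounding decomposition and subtree coupling are then a routine application of the branching property and the monotonicity of the pure-birth populations.
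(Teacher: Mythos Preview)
Your proof is correct and follows essentially the same approach as the paper: both arguments hinge on the subadditivity inequality $\sum_{j=1}^{d+1}(c_j+1)^\alpha \ge (m+d+1)^\alpha$ to compare the pooled root-birth rate against the shifted single-root rate, and then couple the identically distributed descendant subtrees. Your version is in fact more explicit than the paper's, which establishes the rate inequality and then dispatches the coupling in a single sentence (``we can couple the processes in a straightforward way''), whereas you spell out the birth-time ordering $\rho_k \le \sigma_k$ and the shared-subtree construction that turns the rate comparison into a pathwise bound.
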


\begin{proof}
Consider the root node $v$ and is CMJ process $H(t)$. We compare its growth with the sum of $d+1$ i.i.d.\ CMJ processes starting from the isolated vertices $\{u_1, \dots, u_{d+1}\}$. Let $C_v(t)$ denote the number of children of $v$ at time $t$, and let $C_i(t)$ denote the number of children of $u_i$ at time $t$. Let $C_u(t) = \sum_{i=1}^{d+1} C_i(t)$. Note that $C_v(t)$ is simply a Markov process, given by
\begin{equation*}
\mathbb P\left(C_v(t+dt) - C_v(t) \mid C_v(t) = k\right) = (d+k+1)^\alpha \; dt +o(dt).
\end{equation*}
Unlike $C_v$, the process $C_u$ is not Markov. However, for any $(r_1, \dots, r_{d+1})$ such that $\sum_{i=1}^{d+1} r_i = k$, we may write
\begin{equation*}
\mathbb P\Bigg(C_u(t+dt) - C_u(t) = 1 \; \bigg | \; C_i(t) = r_i, \text{ for } 1 \leq i \leq d+1\Bigg) = \sum_{i=1}^{d+1} (r_i+1)^\alpha.
\end{equation*}
Since $\alpha < 1$, we see that no matter what the $r_i$'s are, we must have
\begin{equation*}
(d+k+1)^\alpha \leq \sum_{i=1}^{d+1} (r_i+1)^\alpha.
\end{equation*}
Thus, the process $C_2(t)$ stochastically dominates $C_1(t)$. Since the children in each process behave identically; i.e., they reproduce according to $\xi$, and so do their descendants, we can couple the processes in a straightforward way to conclude that the sum of $d+1$ independent CMJ processes stochastically dominates $H(t)$.
\end{proof}

\end{appendix}

\end{document}